\newtheorem{definition}{Definition}
\newtheorem{lemma}{Lemma}
\newtheorem{theorem}{Theorem}
\newtheorem{corollary}{Corollary}
\newtheorem{assumption}{Assumption}
\newtheorem*{assumption*}{Assumption}
\newcommand{\ud}{\mathrm{d}}
\begin{document}

\setlength{\baselineskip}{6mm}

\title{A Gradually Reinforced Sample-Average-Approximation Differentiable Homotopy Method for a System of Stochastic Equations}
\author{Peixuan Li\footnote{School of Economics and Management, Southeast University. Email: lipeixuan@seu.edu.cn}
\and \,Chuangyin Dang\footnote{The Corresponding Author. Department of Systems Engineering, City University of Hong Kong. Email: mecdang@cityu.edu.hk}
\and Yang Zhan\footnote{School of Management and Engineering, Nanjing University. Email: zhanyang@nju.edu.cn}\\
}

\date{ }
\maketitle
\begin{abstract}

This paper intends to apply the sample-average-approximation (SAA) scheme to solve a system of stochastic equations (SSE), which has many applications in a variety of fields. The SAA is an effective paradigm to address risks and uncertainty in stochastic models from the perspective of Monte Carlo principle. Nonetheless, a numerical conflict arises from the sample size of SAA when one has to make a tradeoff between the accuracy of solutions and the computational cost. To alleviate this issue, we  incorporate a gradually reinforced SAA scheme into a differentiable homotopy method and develop a gradually reinforced sample-average-approximation (GRSAA) differentiable homotopy method in this paper.  By introducing a series of continuously differentiable functions of the homotopy  parameter $t$ ranging between zero and one, we establish a differentiable homotopy system, which is able to gradually increase the sample size of SAA as $t$ descends from one to zero. The set of solutions to the homotopy system contains an everywhere smooth path, which starts from an arbitrary point and ends at a solution to the SAA with any desired accuracy. The GRSAA differentiable homotopy method serves as a bridge to link the gradually reinforced SAA scheme and a differentiable homotopy method and retains the nice property of global convergence the homotopy method possesses while greatly reducing the computational cost for attaining a desired solution to the original SSE. Several numerical experiments further confirm the effectiveness and efficiency of the proposed method.

\end{abstract}
\textbf{Keywords}: System of Stochastic Equations, Gradually Reinforced Sample Average Approximation, Differentiable Homotopy Method

\section{Introduction}\label{sec1}
This paper is concerned with  the problem of finding an $x\in \mathbb R^n$ satisfying
\begin{align}\label{s0}\begin{array}{ll}
(SSE): & \mathbb E_\xi [f(x,\xi)]=0,
\end{array}
\end{align}
where $\xi$ is a stochastic parameter in a probability space $\Omega$, $f(\cdot,\xi):\mathbb{R}^n\rightarrow\mathbb{R}^n$ is a continuously differentiable mapping and $\mathbb E[\cdot]$ denotes the expected value over $\Omega$.  As an effective paradigm for addressing risks and uncertainty, the SSE (\ref{s0}) can be regarded as a natural extension of a system of equations (SE) to a stochastic environment and has been extensively studied and applied in a number of areas including economics, game theory, management science and engineering. An excellent review on these applications can be found in  \cite{rockafellar2009variational}. Furthermore, the SSE is closely associated with several other stochastic problems such as stochastic variational inequalities (SVI)  \cite{chen2022dynamic,chen2012stochastic,facchinei2007finite,iusem2019incremental,rockafellar2019solving}, stochastic nonlinear complementarity problems (SNCP) \cite{chen2022stochastic,ferris2013complementarity,jiang2022convergence} and stochastic optimization (SO) \cite{duchi2011adaptive,hu2020sample,Jiang2008Stochastic,na2023inequality}.  

To solve the deterministic SE, several well-known methods have been proposed in the literature, which include Newton methods,  homotopy or path-following methods and their variants. However, the convergence of  Newton methods very much depends on the starting points. The homotopy methods are a class of powerful methods for solving SEs and possess the desired property of global convergence, which play an inevitable and lasting role in various fields; see, for instances, \cite{dang2020an,eibelshauser2023logarithmic,huang2023finding,zhang2023solution}. The homotopy methods can be classified as simplicial homotopy methods and differentiable homotopy methods. The simplicial homotopy methods, dating back to the seminal paper by \cite{B1972Homotopies,Scarf1967The}, are powerful mechanisms for solving equations with highly nonlinearities or non-smooth structures, but they cannot benefit much from the differentiability of problems and can be extremely time-consuming especially when the problem sizes are large. The differentiable homotopy methods, introduced in \cite{kellogg1976constructive}, perfectly overcome the deficiency of the simplicial homotopy methods and are capable of following a smooth path.  

Over the past few decades, the differentiable homotopy methods have been substantively investigated in the literature and applied to various areas such as general equilibrium theory and game theory for solving the problems that can be reformulated as systems of differentiable equations. Two direct proofs were given in \cite{herings2000two} to show the feasibility of linear tracing procedure, which was made differentiable for computing Nash equilibria in \cite{herings2001differentiable}. A stochastic version of linear tracing procedure was established in \cite{herings2004stationary} for the computation and selection of stationary equilibria or stationary Markov perfect equilibria in stochastic games. Later, a more efficient homotopy method based on the idea of interior-point method was developed in \cite{dang2020an}. An all-solution differentiable homotopy method was proposed in \cite{judd2012finding} to find all equilibria for static and dynamic games with continuous strategies.  A convex quadratic penalty differentiable homotopy method was developed in \cite{chen2019differentiable} to compute perfect equilibria for large scale noncooperative games. A differentiable homotopy method was described in \cite{schmedders1998computing} to solve general equilibrium models with incomplete asset markets and its reliability was evidenced by numerous numerical experiments.  A  generically convergent differentiable homotopy method was constituted in \cite{herings2002computing} to compute an equilibrium for a finance version of the general equilibrium problem considered in \cite{schmedders1998computing}.   A proximal block coordinate homotopy framework was presented in \cite{wang2020efficient}  to solve large scale sparse least squares problems through numerically following a piecewise smooth path. A polyhedral homotopy-baed method was designed in \cite{lee2023polyhedral} to find solutions to generalized Nash equilibrium problems. More differentiable homotopy methods and their applications can be found in the literature such as \cite{herings2010homotopy,kubler2000computing,Li2020,zhan2020smooth,zhou2014smoothing} and the references therein.

Unfortunately, all the existing methods fail to directly solve a stochastic system since the underlying mapping $\mathbb E_\xi [f(x,\xi)]$ of an SSE is in a form of an expected value with respect to a certain stochastic parameter vector $\xi$ and needs to be evaluated first. As demonstrated in \cite{Jiang2008Stochastic}, the evaluation of $\mathbb E_\xi [f(x,\xi)]$ is generally a tough job, because the distribution of the stochastic parameter is usually unknown and can only be simulated with historical data.  Even if the distribution is given,  computing multidimensional integrals is very costly. To deal with these difficulties, two classes of methods have been proposed in the literature: stochastic approximation (SA) methods and simulation based approaches.  It was proved in \cite{Pflug1996Optimization} that the SA method is almost surly convergent to a solution of an SSE, only when $f$ satisfies certain conditions and the samples and stepsize are suitably chosen. Notwithstanding, this method is very sensitive to the choice of the stepsize at each iteration and sometimes performs poorly in practice. A modified SA method with a better performance was proposed in \cite{nemirovski2009robust}. This better performance, however, occurs just for a special class of convex stochastic optimization and saddle point problems.  

The simulation based approaches are fashionable tools to address the uncertainty. Among them, the sample-average-approximation (SAA) is one of the most popular representatives. The basic idea of the SAA is rather simple: randomly generating  samples for the stochastic parameter $\xi$ and approximating the ``true'' underlying mapping by the average of several deterministic mappings corresponding to these samples.  As demonstrated in \cite{hu2012sample,kleywegt2002sample,shapiro1998simulation}, an appropriate incorporation of the SAA into a numerical algorithm can lead to a reasonable performance in solving general stochastic problems, and an exponential convergence rate of the SAA for the SVI and SNCP has been established under some mild conditions. Inspired by this success, this paper intends to incorporate the SAA into a differentiable homotopy method for a  solution to the SSE (\ref{s0}). To apply the SAA in the existing methods, one needs to make a tradeoff between a cheap coarse estimate and an expensive finer estimate. A common practice is to use a gradually reinforced SAA scheme, that is, initially selecting a small sample size and then gradually increasing the sample size to approximate the target expected value. This allows a rapid progress at early stages and reduces the overall compuational cost for finding a desired solution. However, this practice may fail to converge due to the changes in sample size~\cite{Byrd2012Sample}.

It is well known that a differentiable homotopy method attains global convergence through a continuous deformation process. This naturally raises the question: can we devise a differentiable homotopy method, which is able to progressively increase the sample size of SAA during the deformation process? The question seems quite challenging and were not considered in the existing work due to the intuitive collision between the discreteness of samples and the continuous deformation of the homotopy methods. To overcome this hurdle,  we introduce a sequence of continuously differentiable functions of the homotopy parameter $t$ ranging between zero and one and incorporate with these functions a gradually reinforced sample-average-approximation (GRSAA) scheme into a differentiable  homotopy method. As a result of this incorporation, we establish a differentiable homotopy system and reap a GRSAA differentiable homotopy method.  As $t$ descends from one to zero, the homotopy system gradually increases the sample size  and eventually reaches the desired SAA (\ref{ori}) at $t=0$. The solution set of the  homotopy system  contains an everywhere smooth path, which starts from an arbitrary point and ends at a solution to the desired SAA or a satisfactory approximate solution to the SSE (\ref{s0}).  The GRSAA differentiable homotopy method serves as a bridge to link a gradually reinforced SAA scheme and a differentiable homotopy method and retains the inherent property of global convergence of a standard differentiable homotopy method. 
To evince the benefit of differentiability, we also present a GRSAA simplicial homotopy method.
To make numerical comparisons with a standard differentiable homotopy method and the GRSAA simplicial homotopy method, we have implemented the GRSAA differentiable homotopy method to solve several important applications of the SSE such as the SVI and market equilibrium problems. Numerical results further verify that two main features of the GRSAA differentiable homotopy method, the gradual reinforcement in sample size and differentiability, can significantly enhance the effectiveness and efficiency.

The rest of the paper is organized as follows. In Section \ref{sec2}, we first give a brief review about the gradually reinforced SAA scheme and differentiable homotopy methods  and then develop a gradually reinforced sample-average-approximation (GRSAA) differentiable homotopy method to solve the SAA system for the SSE (\ref{s0}). The convergence properties of the proposed method are discussed in Section \ref{sec3}.  For numerical comparisons, Section \ref{sec4} describes a GRSAA simplicial homotopy method, which belongs to a type of non-differentiable homotopy method. In Section \ref{sec5}, we employ the GRSAA differentiable homotopy method to solve some numerical examples and compare the performance of the GRSAA differentiable homotopy method with that of the GRSAA simplicial homotopy method and a standard  differentiable homotopy method. We also apply the GRSAA differentiable homotopy method to solve several large-scale SSEs. All the numerical results are reported in Section \ref{sec5}. The paper is concluded in Section \ref{sec6}.


\section{A Gradually Reinforced SAA Differentiable Homotopy Method}\label{sec2}
\subsection{Background of the SAA and Homotopy Methods}\label{ssec31}
The SAA is closely associated with the sample-path optimization (SPO) method, which is an effective paradigm for solving the problems that
arise in the study of complex stochastic systems. The basic concept of the SPO method is to design some deterministic SEs with the underlying mappings being a sequence of computable functions, which has the underlying mapping of the original SSE as its limit. Some convergence conditions of the SPO method were provided in \cite{G1999Sample}.  The sample-average-approximation (SAA) is generated by formulating the underlying mappings in the SPO into some sample average mappings. More specifically, suppose that  $\xi_1,\xi_2,\ldots,\xi_N$ are independently and identically distributed samples of $\xi$, where $N$ is a positive integer. According to the Monte Carlo principle, the expected value $\mathbb E_\xi [f(x,\xi)]$ can be approximated by the deterministic mapping, $\dfrac{1}{N}\sum\limits_{i=1}^{N}f(x,\xi_i)$, and the well-known strong Law of Large Numbers assures that for any $\epsilon>0$, there exists a sufficiently large number $N_0$ such that when $N\ge N_0$,
\begin{equation}\label{monte}
\begin{array}{l}
P\{|\dfrac{1}{N}\sum\limits_{i=1}^{N}f(x,\xi_i)-\mathbb E_\xi [f(x,\xi)]|<\epsilon\}=1.
\end{array}
\end{equation}
Let $x_N\in \mathbb R^n$ be a solution to the following  SAA with very large value of $N$, 
\begin{align}\label{ori}
\dfrac{1}{N}\sum\limits_{i=1}^{N}f(x,\xi_i)=0.
\end{align}
Substituting $x_N$ into (\ref{monte}), we have $P\{|\mathbb E_\xi f(x_N,\xi)|<\epsilon\}=1$, which implies that $x_N$ provides a satisfactory  approximate solution to the SSE (\ref{s0}) when $N$ is sufficiently large. Furthermore, when $N$ goes to infinity, $x_N$ provides an accurate solution to the SSE (\ref{s0}) with probability one.  However,  the rate at which $\dfrac{1}{N}\sum\limits_{i=1}^{N}f(x,\xi_i)$ converges to  $\mathbb E[f(x,\xi)]$  is $O(N^{-\frac{1}{2}})$, that is, $N$ should be increased by 100 times in order to improve the accuracy of an estimate by one digit~\cite{kleywegt2002sample,shapiro2000rate}. Therefore,  to have a highly accurate final solution, one has to choose a very large $N$. 

As stated in Section \ref{sec1}, a differentiable homotopy method provides a globally convergent solution approach to a deterministic SE. Such a method starts from the unique solution of a trivial problem, follows a smooth path in the solution set of a homotopy system and ends at a solution to the targeted problem. One straightforward 
approach to solving the SAA  (\ref{ori}) is to apply a standard differentiable homotopy method and obtain a homotopy system,
\[(1-t)\dfrac{1}{N}\sum\limits_{i=1}^{N}f(x,\xi_i)+t(x-x^0)=0,\]
where $x^0$ is any given point and $t$ is the homotopy parameter ranging between zero and one.  Clearly, the sample size remains to be $N$ in the above system for any value of $t\in [0,1)$ and the approach has no improvement to the classic SAA paradigm. Therefore, one would like to incorporate a differentiable homotopy method with a gradually reinforced SAA scheme so that the sample size can be increased progressively as $t$ descends from one to zero. For example, we can divide the interval $[0,1]$ into $N$ subintervals, $[0,1/N)$, $[1/N,2/N)$, $\dots,$ $[1-1/N,1]$. Let $k$ be an integer smaller than $N$. At $t=1-k/N$, only the first $k$ samples are used, and we are interested in the problem $\sum\limits_{i=1}^k f(x,\xi_i)/k=0$. As $t$ decreases from  $1-k/N$ to $1-(k+1)/N$, $\xi_{k+1}$ enters the picture, and the homotopy system varies from $\sum\limits_{i=1}^{k}f(x,\xi_i)/k=0$ to $\sum\limits_{i=1}^{k+1}f(x,\xi_i)/(k+1)=0$. In this way, we do not need to use a large number of samples for every value of $t\in [0,1)$. 

However, there is a natural conflict between the SAA and the homotopy process. The homotopy process is a continuous deformation process, while the sample set for the SAA is discrete. As the homotopy  parameter $t$ varies, the samples enter the homotopy system one by one, or more generally group by group. In the above example, it is natural to construct a series of homotopies for $t$ in different subintervals $[(k-1)/N,k/N]$, $k=1,2,\dots,N$, and switch homotopies at each connection point $k/N$, which leads to a piecewise smooth path. The idea of switching homotopies was used in the literature for computing market equilibria in incomplete markets \cite{Brown1996aComputing}. Since $N$ is very large, switching homotopies will frequently occur, which makes the computation very costly.  In the next subsection, we show that this difficulty can be overcome with a sequence of continuously differentiable functions of $t$.

\subsection{A Gradually Reinforced SAA Differentiable Homotopy System}\label{ssec22}
In this subsection, we  incorporate a gradually reinforced SAA scheme into a differentiable homotopy method and develop a gradually reinforced sample-average-approximation (GRSAA) differentiable homotopy method.  In this paper,  the homotopy  parameter will descend from one to zero.  With the proposed method, the sample size in the homotopy system is getting larger and larger  as the homotopy  parameter $t$ is decreasing and becomes identical to that of the desired SAA as $t=0$. The formulation of the GRSAA  differentiable homotopy system in our method consists of four steps.
\begin{itemize}
	\item \textbf{Construction of a sequence of sample-average mappings.} 
	
Let $L$ be a positive integer with $L\le N$ and $\mathcal{L}=\{1,2,\dots, L\}$. We partition the set of $N$ samples into $L$ subsets.  For $\ell\in\cal{L}$, let $q_\ell$ be the number of samples in the first $\ell$ subsets. Clearly,  $0<q_1<q_2<\dots<q_L=N$. For each $\ell$, we define
\[f^{\ell}(x)=\frac{1}{q_\ell}\sum_{i=1}^{q_\ell}f(x,\xi_i).
\]
Let $f^0(x)=0\in\mathbb{R}^n$ for all $x\in\mathbb{R}^n$. Then $f^{\ell}(x)$, $\ell\in\mathcal{L}\cup\{0\}$, form a sequence of sample-average mappings. 

\item \textbf{Formulation of a continuous mapping deforming from $f^{\ell-1}(x)$ to $f^{\ell}(x)$ for each $\ell\in\mathcal{L}$.} 

Let $t_\ell$, $\ell\in \mathcal{L}\cup\{0\}$, be a sequence with $1=t_0>t_1>\dots>t_L=0$. Let $d^\ell(x,t)$ be a mapping on $\mathbb{R}^n\times [t_\ell,t_{\ell-1}]$ that deforms from $f^{\ell-1}(x)$ to $f^{\ell}(x)$ as $t$ decreases from $t_{\ell-1}$ to $t_\ell$. It is convenient to define
\[d^\ell(x,t)=(1-\theta_{\ell}(t))f^{\ell-1}(x) + \theta_\ell(t) f^{\ell}(x),
\]
where $\theta_{\ell}:[t_{\ell},t_{\ell-1}]\to [0,1]$ is a continuous function with $\theta_\ell(t_{\ell-1})=0$ and $\theta_\ell(t_{\ell})=1$. 

\item \textbf{Composition of a continuously differentiable mapping deforming from  $f^{0}(x)$ to $f^{L}(x)$.}

It is straightforward to define a mapping $d: \mathbb{R}^n\times[0,1]\to \mathbb{R}^n$ as $d(x,t)=d^\ell(x,t)$ when $t\in[t_{\ell},t_{\ell-1}]$, $\ell\in \mathcal{L}$. Clearly, $d(x,t)$ is a continuous mapping on $\mathbb{R}^n\times [0,1]$, which deforms from $f^{0}(x)$ to $f^{L}(x)$ as $t$ descends from $t_0 =1$ to $t_L =0$. To ensure continuous differentiability of $d(x,t)$ on  $\mathbb{R}^n\times [0,1]$, the function $\theta_\ell(t)$ should be continuously differentiable on $[t_\ell,t_{\ell-1}]$ with $\frac{\ud }{\ud t}\theta_\ell(t_\ell)=0$ and $\frac{\ud }{\ud t}\theta_\ell(t_{\ell-1})=0$. One potential candidate of $\theta_\ell(t)$ is given by
\begin{equation}
\theta_\ell(t)=\sin^2\left(\frac{t-t_{\ell-1}}{t_{\ell}-t_{\ell-1}}\frac{\pi}{2}\right).
\end{equation}
In our development, we will adopt the sequence of functions $\theta_\ell(t)$ for all $\ell\in\mathcal L$. 

\item \textbf{Establishment of a GRSAA differentiable homotopy system.}

In order to establish a GRSAA differentiable homotopy system with a unique known solution on the level of $t=1$, we incorporate with the homotopy  parameter $t$ an affinely linear function into the homotopy mapping $d(x,t)$ and arrive at a homotopy system,\begin{equation}\label{s1}
\begin{array}{l}
h(x,t)=(1-t)d(x,t)+t(x-x^0)=0,
\end{array}
\end{equation}
where $x^0\in  \mathbb R^n$ is an arbitrarily given point. 
Clearly,  at $t=t_0=1$, the homotopy system (\ref{s1}) has a unique solution $x^0$. As $t$ is decreasing, more and more samples are entering into $h(x,t)$. As $t$ goes to $t_{L}=0$, the system (\ref{s1}) approaches  the desired SAA (\ref{ori}). 
\end{itemize}
\begin{lemma}\label{l1}
	$d(x,t)$ is continuously differentiable on $\mathbb{R}^n\times [0,1]$.
\end{lemma}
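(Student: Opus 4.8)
The plan is to reduce the global $C^1$ statement to a local gluing check at the finitely many interior connection points $t_1,\dots,t_{L-1}$, since away from these points the mapping $d$ coincides with a single branch $d^\ell$ that is manifestly continuously differentiable.

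First I would observe that each $f^\ell(x)=\frac{1}{q_\ell}\sum_{i=1}^{q_\ell}f(x,\xi_i)$ is continuously differentiable on $\mathbb{R}^n$, being a finite sum of the continuously differentiable mappings $f(\cdot,\xi_i)$, while $f^0\equiv 0$ is trivially $C^1$. Next I would note that each $\theta_\ell(t)=\sin^2\!\left(\frac{t-t_{\ell-1}}{t_\ell-t_{\ell-1}}\frac{\pi}{2}\right)$ is smooth on $[t_\ell,t_{\ell-1}]$, being the composition of $\sin^2$ with an affine map. Consequently, on each closed strip $\mathbb{R}^n\times[t_\ell,t_{\ell-1}]$ the branch $d^\ell(x,t)=(1-\theta_\ell(t))f^{\ell-1}(x)+\theta_\ell(t)f^\ell(x)$ is continuously differentiable, with partial Jacobian $\partial_x d^\ell=(1-\theta_\ell(t))\partial_x f^{\ell-1}(x)+\theta_\ell(t)\partial_x f^\ell(x)$ and partial derivative $\partial_t d^\ell=\theta_\ell'(t)\big(f^\ell(x)-f^{\ell-1}(x)\big)$.

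The substance of the argument lies in matching the one-sided values and first partial derivatives at each interior node $t_\ell$ with $1\le\ell\le L-1$, where the branch $d^{\ell+1}$, valid on $[t_{\ell+1},t_\ell]$, meets the branch $d^\ell$, valid on $[t_\ell,t_{\ell-1}]$. Using the endpoint values $\theta_{\ell+1}(t_\ell)=0$ and $\theta_\ell(t_\ell)=1$, I would check that both branches attain the common value $f^\ell(x)$ and share the common partial Jacobian $\partial_x f^\ell(x)$ at $t=t_\ell$, so that $d$ and $\partial_x d$ extend continuously across the node. For the $t$-derivative, the decisive observation is that $\partial_t d^\ell=\theta_\ell'(t)(f^\ell-f^{\ell-1})$ and $\partial_t d^{\ell+1}=\theta_{\ell+1}'(t)(f^{\ell+1}-f^\ell)$ both vanish at $t=t_\ell$, because the construction imposes $\theta_\ell'(t_\ell)=0$ and $\theta_{\ell+1}'(t_\ell)=0$; I would verify these two vanishing conditions directly from the closed form of $\theta_\ell$ by differentiating $\sin^2$ and evaluating at the endpoints. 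Hence the left and right $t$-partials agree, both being zero, at every interior node.

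The main obstacle is the bookkeeping at the junctions: one must correctly identify which endpoint role $t_\ell$ plays in each adjacent function, namely the upper endpoint $t_{(\ell+1)-1}$ for $\theta_{\ell+1}$ and the lower endpoint for $\theta_\ell$, and then invoke the matching vanishing-derivative condition on each side. Once the one-sided values and partials are shown to coincide at all of $t_1,\dots,t_{L-1}$, I would conclude by a standard gluing argument that $d$ is continuously differentiable on all of $\mathbb{R}^n\times[0,1]$, the boundary values $t=1$ and $t=0$ being covered by their single adjacent branch. No further estimates are needed.
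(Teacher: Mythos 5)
Your proposal is correct and follows essentially the same route as the paper: smoothness on each strip is immediate, and the gluing at the interior nodes $t_\ell$ rests on the endpoint conditions $\theta_\ell(t_\ell)=1$, $\theta_{\ell+1}(t_\ell)=0$ and the vanishing of $\theta_\ell'$ and $\theta_{\ell+1}'$ there, which forces both one-sided $t$-partials to be zero. You are in fact slightly more complete than the paper's proof, which only verifies the matching of the $t$-derivatives and leaves the agreement of values and $x$-Jacobians implicit.
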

\begin{proof}
	Clearly, $d(x,t)$ is continuously differentiable on $\mathbb{R}^n\times\cup_{\ell\in{\mathcal L}}(t_{\ell},t_{\ell-1})$. Next, we show that $d(x,t)$ is continuously differentiable at the connection points $(x, t_{\ell})$, $\ell\in \mathcal{L}\backslash\{L\}$.
	For each $\ell\in \mathcal{L}$, we have $\theta_{\ell}(t_{\ell-1})=0$, $\theta_{\ell}(t_{\ell})=1$ and
	\begin{equation*}
		\begin{array}{ll}
		\frac{\ud}{\ud t}\theta_\ell(t)&=\frac{\pi}{t_{\ell}-t_{\ell-1}}\sin\left(\frac{t-t_{\ell-1}}{t_{\ell}-t_{\ell-1}}\frac{\pi}{2}\right)\cos\left(\frac{t-t_{\ell-1}}{t_{\ell}-t_{\ell-1}}\frac{\pi}{2}\right)\\
\noalign{\vskip 6pt}
&=\frac{\pi}{2(t_{\ell}-t_{\ell-1})}\sin\left(\frac{t-t_{\ell-1}}{t_{\ell}-t_{\ell-1}}\pi\right).
	\end{array}
\end{equation*}
The derivatives at the connection points are given by
	\[
	\frac{\ud}{\ud t}\theta_\ell(t_{\ell-1})=0\quad\text{and}\quad \frac{\ud}{\ud t}\theta_\ell(t_\ell)=0.
	\]
	Then, for each $\ell\in \mathcal{L}\backslash\{L\}$,
	\[
	\lim\limits_{t\rightarrow t_{t_{\ell}}^-}\dfrac{\ud}{\ud t}d(x,t)=[f^{\ell+1}(x)-f^{\ell}(x)]\lim\limits_{t\rightarrow t_{t_{\ell}}^-}\dfrac{\ud}{\ud t} \theta_{\ell+1}(t)=[f^{\ell+1}(x)-f^{\ell}(x)] \frac{\ud}{\ud t}\theta_{\ell+1}(t_\ell)=0
	\]
	and\[
	\lim\limits_{t\rightarrow t_{t_{\ell}}^+}\dfrac{\ud}{\ud t}d(x,t)=[f^{\ell}(x)-f^{\ell-1}(x)]\lim\limits_{t\rightarrow t_{t_{\ell}}^+}\dfrac{\ud}{\ud t} \theta_{\ell}(t)=[f^{\ell}(x)-f^{\ell-1}(x)] \frac{\ud}{\ud t}\theta_{\ell}(t_\ell)=0.\]
	Thus, $d(x,t)$ is continuously differentiable at each connection point. This completes the proof of the lemma.
\end{proof}
From Lemma \ref{l1}, we attain the following corollary immediately.
\begin{corollary}\label{co2}
	$h(x,t)$ is a continuously differentiable mapping from $\mathbb R^n\times [0,1]$ to $\mathbb R^n$. 
\end{corollary}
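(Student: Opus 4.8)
The plan is to reduce the corollary entirely to Lemma \ref{l1}, since the one genuinely delicate point—differentiability across the connection points $t_\ell$ where the piecewise definition of $d$ switches—has already been settled there. What remains is only to verify that the algebraic operations used to build $h$ from $d$ preserve continuous differentiability.

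First I would decompose $h(x,t)=(1-t)d(x,t)+t(x-x^0)$ into its three constituents and check each separately. The scalar coefficient $1-t$ is a polynomial in $t$, hence $C^\infty$ on $[0,1]$. The affine term $t(x-x^0)$ is, componentwise, a polynomial in $(x,t)$ and therefore $C^\infty$ on $\mathbb{R}^n\times[0,1]$. The remaining ingredient, the vector-valued mapping $d(x,t)$, is continuously differentiable on $\mathbb{R}^n\times[0,1]$ by Lemma \ref{l1}.

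Next I would invoke the elementary closure properties of the class of $C^1$ mappings. The product of a $C^1$ scalar function with a $C^1$ vector-valued mapping is again $C^1$, so $(1-t)d(x,t)$ is continuously differentiable on $\mathbb{R}^n\times[0,1]$. Adding the $C^\infty$ (a fortiori $C^1$) term $t(x-x^0)$ preserves this, since a sum of $C^1$ mappings is $C^1$. Combining these observations yields that $h$ is continuously differentiable on $\mathbb{R}^n\times[0,1]$, which is exactly the assertion.

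I do not anticipate any real obstacle: the only location where one might worry—the switching points $(x,t_\ell)$—causes no trouble here precisely because Lemma \ref{l1} already guarantees that $d$ and its first partials agree from both sides at each such point. Consequently the corollary is immediate from Lemma \ref{l1} together with the standard fact that products and sums of continuously differentiable functions remain continuously differentiable, and no nontrivial computation is required.
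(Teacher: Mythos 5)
Your proposal is correct and matches the paper's intent: the paper derives this corollary directly from Lemma \ref{l1} as an immediate consequence, exactly via the observation that $h$ is built from $d$ by multiplication with and addition of smooth polynomial terms. You have simply written out the closure-property details the paper leaves implicit.
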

Corollary \ref{l1} shows that the homotopy system (\ref{s1}) is a continuously differentiable system, which successfully settles the conflict between the discreteness of the sample set and the continuity requirement of homotopy deformation. With $h(x,t)$, we will be able to establish the existence of a smooth path, and switching homotopies is no longer needed.  
Figure \ref{f2} illustrates how the GRSAA differentiable homotopy method with $h(x,t)=0$ works.  In Figure \ref{f2}, for $t\in [0,1]$, the points on the path are solutions to the homotopy system (\ref{s1}). As a result of the continuity of $h(x,t)$, every limiting point  of the path, $x^*$, satisfies $f^{L}(x^*)=0$. We remark that a replacement  of  $t_\ell=1/(1+\tau_0\ell)$ in the above development yields a GRSAA differentiable homotopy method converging to an accurate solution to (\ref{s0}) with probability one.

\begin{figure}[!htbp]
\centering
\includegraphics[height=0.48\textwidth]{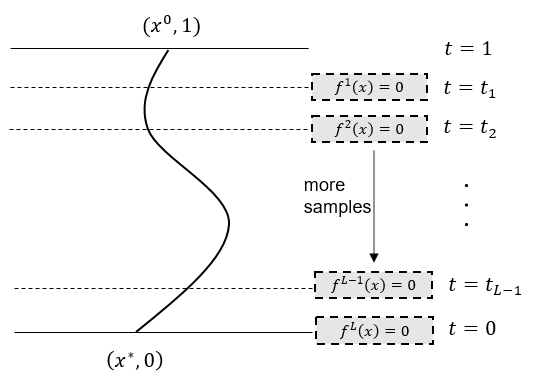}
 \caption{A GRSAA Differentiable Homotopy Method}\label{f2}
\end{figure}

\section{Development of a Smooth Path}\label{sec3}
\subsection{Convergence Properties}
 We prove in this subsection that the GRSAA differentiable homotopy method with $h(x,t)=0$ is globally convergent under some mild conditions. Let us denote by ${\rm int}(W)$ the interior of the set $W$. To ensure the existence of a smooth path that starts from $(x^0,1)$ and ends at a solution on the level of $t=0$, the path is required to be trapped in a non-empty compact set.  To meet this requirement, we make the following assumption about $f$.
\begin{assumption}\label{a1}There exist compact convex sets $\mathbb{X}\subset \mathbb R^n$ with nonempty interior such that, for any realization of $\xi$, 
$(x-x^0)^{\top}f(x,\xi)>0$ for all  $x\notin {\rm int}(\mathbb{X})$ and $x^0\in {\rm int}(\mathbb{X})$.
\end{assumption}
Assumption \ref{a1} shows a global convergence condition in fixed point problems and holds in various problem-classes including VIs, MCPs, nonlinear programming problems (NLPs), and minimax problems (MMPs) under some mild conditions; see \cite{awoniyi1983efficient,merrill1972applications,todd1976}. With this assumption, the solutions to the original problem, $f(x,\xi)=0$ for any realization of $\xi$, are restricted in a compact convex set.
Let $x^0$ be an arbitrary interior point of $\mathbb X$.
 Then, for any $x\not\in  \text{int}(\mathbb X)$, $(x-x^0)^\top d(x,t)>0$ and 
\begin{equation}\label{hxt}
\begin{array}{l}
(x-x^0)^\top h(x,t)=(1-t)(x-x^0)^\top d(x,t)+t\|x-x^0\|^2>0.
\end{array}
\end{equation}
We denote the set of solutions to the system (\ref{s1}) by 
\[H^{-1}=\{(x,t)\in \mathbb R^n\times [0,1]\,|\, h(x,t)=0\}.\]
 Under {\bf Assumption~\ref{a1}}, we prove in the following that $H^{-1}$ contains a connected component intersecting both $\mathbb{R}^n\times\{0\}$ and $\mathbb{R}^n\times\{1\}$. 
For any given $(x,t)\in \mathbb X\times [0,1]$, we constitute an unconstrained strictly convex optimization problem,
\begin{equation}\label{opt}
\begin{array}{ll}
\min\limits_{y\in\mathbb R^n}& y^\top h(x,t)+\dfrac{1}{2}\|y-x\|^2.
\end{array}
\end{equation}
An application of the sufficient and necessary optimality condition to the problem (\ref{opt}) yields the system,
\begin{align}\label{oc}
h(x,t)+(y-x)=0.
\end{align}
Let  $\varphi(x,t)$ denote the unique solution to the problem  (\ref{opt}). Clearly, $\varphi(x,t)$ is a non-empty compact convex set. It follows from the system~(\ref{oc}) that $\varphi(x,t)=x-h(x,t)$. Thus, $\varphi(x,t)$ is a continuous mapping on $\mathbb{R}^n\times [0,1]$.  
Furthermore, we derive from (\ref{hxt}) that for any given $t$, 
\begin{align}\label{as}
(x^0-x)^\top (\varphi(x,t)-x)>0,\quad\text{for all} \,\, x\not\in\text{int}(\mathbb X).
\end{align}
These results together lead us to the following theorem.
\begin{theorem}\label{th0}
Suppose that {\bf Assumption~\ref{a1}} holds. Then, for any given $t\in [0,1]$, $\varphi(x,t)$ has a fixed point in  ${\rm int}(\mathbb X)$.
\end{theorem}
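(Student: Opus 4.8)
The plan is to realize the desired interior fixed point through Brouwer's theorem applied to a projected version of $\varphi$, and then to use the boundary inequality (\ref{as}) to upgrade the conclusion from a fixed point of the projected map to a genuine interior fixed point of $\varphi(\cdot,t)$ itself. The composition with a projection is unavoidable because $\varphi(\cdot,t)$ need not map $\mathbb{X}$ into itself, so Brouwer cannot be invoked on $\varphi$ directly.

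First I would introduce the Euclidean projection $\pi_{\mathbb X}:\mathbb R^n\to\mathbb X$ onto the nonempty compact convex set $\mathbb X$; since $\mathbb X$ is closed and convex, $\pi_{\mathbb X}$ is well defined and continuous (indeed nonexpansive). Because $\varphi(x,t)=x-h(x,t)$ is continuous on $\mathbb R^n$ for each fixed $t$, the composition $g(x):=\pi_{\mathbb X}(\varphi(x,t))$ is a continuous self-map of the nonempty compact convex set $\mathbb X$. Brouwer's fixed point theorem then provides a point $x^*\in\mathbb X$ satisfying $x^*=\pi_{\mathbb X}(\varphi(x^*,t))$.

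Next I would show $x^*\in\mathrm{int}(\mathbb X)$ by contradiction. Suppose $x^*\notin\mathrm{int}(\mathbb X)$. The variational characterization of the projection gives $(\varphi(x^*,t)-x^*)^\top(z-x^*)\le 0$ for every $z\in\mathbb X$; taking $z=x^0\in\mathrm{int}(\mathbb X)\subset\mathbb X$ yields $(x^0-x^*)^\top(\varphi(x^*,t)-x^*)\le 0$. On the other hand, since $x^*\notin\mathrm{int}(\mathbb X)$, inequality (\ref{as}) gives $(x^0-x^*)^\top(\varphi(x^*,t)-x^*)>0$, a contradiction. Hence $x^*\in\mathrm{int}(\mathbb X)$.

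Finally, because the projection of any point lying outside $\mathbb X$ must fall on the boundary $\partial\mathbb X$, the equality $x^*=\pi_{\mathbb X}(\varphi(x^*,t))$ together with $x^*\in\mathrm{int}(\mathbb X)$ forces $\varphi(x^*,t)\in\mathbb X$, so that $\pi_{\mathbb X}(\varphi(x^*,t))=\varphi(x^*,t)$ and therefore $\varphi(x^*,t)=x^*$. Thus $x^*$ is a fixed point of $\varphi(\cdot,t)$ lying in $\mathrm{int}(\mathbb X)$, which is exactly the assertion. I expect the only delicate step to be this passage from the projected map back to $\varphi$: the boundary estimate (\ref{as}), itself a direct consequence of Assumption~\ref{a1} through (\ref{hxt}), is precisely what rules out a spurious fixed point on $\partial\mathbb X$ and simultaneously guarantees that the fixed point produced by Brouwer is interior and genuine.
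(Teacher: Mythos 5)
Your proposal is correct and follows essentially the same route as the paper: compose $\varphi(\cdot,t)$ with the Euclidean projection onto $\mathbb X$, apply Brouwer's theorem to the resulting self-map, and use the boundary inequality (\ref{as}) to rule out a spurious fixed point on $\partial\mathbb X$. The only cosmetic difference is that you invoke the standard variational characterization of the projection, where the paper re-derives that inequality by an explicit computation with the convex combination $(1-\rho)x^*(t)+\rho x^0$; your ordering (interiority first, then genuineness of the fixed point) is in fact slightly cleaner.
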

\begin{proof}
Let $t$  be any given value in $[0,1]$.  For any given $x\in\mathbb X$, we denote by $\psi(x)$ the unique solution to the strictly convex quadratic optimization problem,
\[\min\limits_{y\in\mathbb X}\|y-\varphi(x,t)\|^2_2.\]
Clearly, $\psi(x)$ is a continuous mapping from $\mathbb X$ to $\mathbb X$. Since $\mathbb X$ is a convex compact set, the well-known Brouwer's fixed point theorem asserts that $\psi(x)$ has a fixed point in $\mathbb X$. 
Let $x^*(t)\in\mathbb X$ be a fixed point of $\psi(x)$. Then $x^*(t)$ is the fixed point of $\varphi(x,t)$. Next we prove this assertion by contradicton. 

Suppose that $x^*(t)\ne\varphi(x^*(t),t)$. Then we must have $x^*(t)\in\partial \mathbb X$ and $\varphi(x^*(t), t)\notin\mathbb X$. Let $\widetilde x^*(t)=(1-\rho)x^*(t)+\rho x^0$ for $\rho\in (0,1]$. $\widetilde x^*(t)\in \rm int(\mathbb X)$ since $\mathbb X$ is convex. It follows that
\[\begin{array}{rl}
 & \|\varphi(x^*(t),t)-\widetilde x^*(t))\|^2_2\\
 \noalign{\vskip6pt}
 = & \|\varphi(x^*(t),t)-x^*(t)+x^*(t)-(1-\rho)x^*(t)-\rho x^0\|^2_2\\
 \noalign{\vskip6pt}
 = & \|\varphi(x^*(t),t)-x^*(t)-\rho(x^0-x^*(t))\|^2_2\\
 \noalign{\vskip6pt}
 = & \|\varphi(x^*(t),t)-x^*(t)\|^2_2 -2\rho(x^0-x^*(t))^\top(\varphi(x^*(t),t)-x^*(t))+\rho^2\|x^*(t)-x^0\|^2.
\end{array}
\]
Recall that \((x^0-x^*(t))^\top(\varphi(x^*(t),t)-x^*(t))> 0.\) Thus, when $\rho>0$ is sufficiently small, one must have
\[\|\varphi(x^*(t),t)-\widetilde x^*(t))\|^2_2< \|\varphi(x^*(t),t)-x^*(t)\|^2_2.\]
Therefore, $\psi(\varphi(x^*(t),t))\ne x^*(t)$. Hence, \[(x^0-x^*(t))^\top(\varphi(x^*(t),t)-x^*(t))\le 0.\] 
A contradiction occurs. This completes the proof.
\end{proof}
Theorem \ref{th0} says that for any given $t\in [0,1]$, there exists an $x\in\mathbb X$ such that $\varphi(x,t)=x$. This conclusion shows that the homotopy system (\ref{s1}) is precisely the same as the system (\ref{oc}) and the set of solutions to the homotopy system can be rewritten as 
\[H^{-1}=\{(x,t)\in \mathbb X\times [0,1]\,|\,x=\varphi(x,t)\}.\] 
For our further development, we need the following fixed point theorem.
\begin{theorem}[\bfseries{Browder's Fixed Point Theorem}]\label{th1}
Let $S$ be a non-empty,  compact and convex subset of $\mathbb{R}^{n} $ and let $f: S \times [0,1] \rightarrow S$ be a continuous correspondence. Then, the set $G=\{(s,t)\in S \times [0,1]\,|\,s=f(s,t)\}$ contains a connected subset $G^{\rm c}$ such that $(S \times \{1\})\bigcap G^{\rm c}\ne \emptyset$ and $(S \times \{0\})\bigcap G^c\ne \emptyset$.
\end{theorem}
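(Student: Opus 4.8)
The plan is to identify the desired set $G^{\rm c}$ as the connected component of the fixed-point set that reaches both ends $t=0$ and $t=1$, and to establish its existence by contradiction, combining a topological separation lemma with the homotopy invariance of the Brouwer fixed-point index. First I would set $g(s,t)=s-f(s,t)$ and note that $G=\{(s,t)\,|\,g(s,t)=0\}$ is the preimage of $0$ under a continuous map, hence a closed, and therefore compact, subset of $S\times[0,1]$. Writing $S_0=G\cap(S\times\{0\})$ and $S_1=G\cap(S\times\{1\})$, Brouwer's fixed point theorem applied to the self-maps $f(\cdot,0)$ and $f(\cdot,1)$ of the compact convex set $S$ guarantees that $S_0$ and $S_1$ are non-empty; they are plainly closed and disjoint. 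The goal is thus to produce a connected component of $G$ meeting both $S_0$ and $S_1$, which I would take to be $G^{\rm c}$.

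Suppose, toward a contradiction, that no connected component of $G$ meets both $S_0$ and $S_1$. Here I would invoke the standard fact that in a compact metric space the connected components coincide with the quasi-components, which yields the separation lemma: $G$ can be written as a disjoint union $G=G_0\cup G_1$ of two relatively clopen (hence compact) sets with $S_0\subseteq G_0$ and $S_1\subseteq G_1$. Because $G_0$ and $G_1$ are disjoint compact sets, they are separated by a positive distance, so I can enclose them in disjoint open neighborhoods $U_0\supseteq G_0$ and $U_1\supseteq G_1$ inside $S\times[0,1]$ whose closures remain disjoint. Since every fixed point of the homotopy lies in $G\subseteq U_0\cup U_1$, no fixed point lies on $\partial U_0$ or $\partial U_1$.

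The core of the argument is then a degree-counting step. For each $t\in[0,1]$ let $(U_0)_t=\{s\in S\,|\,(s,t)\in U_0\}$, and consider the fixed-point index $\iota(t)$ of $f(\cdot,t)$ over the open set $(U_0)_t$. Since fixed points avoid $\partial U_0$, no fixed point of $f(\cdot,t)$ sits on the boundary of $(U_0)_t$ for any $t$, so $\iota(t)$ is well defined and, by the homotopy invariance of the index, constant in $t$. At $t=0$ all fixed points of $f(\cdot,0)$ lie in $S_0\subseteq G_0\subseteq U_0$, so $\iota(0)$ equals the total index of the self-map $f(\cdot,0)$ of the compact convex set $S$, which is $1$. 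At $t=1$ all fixed points lie in $S_1\subseteq G_1\subseteq U_1$, a set disjoint from $U_0$; hence $(U_0)_1$ contains no fixed point and $\iota(1)=0$. This forces $1=0$, the sought contradiction, and establishes the existence of $G^{\rm c}$.

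I expect the delicate point to be the index step rather than the set-theoretic reduction: one must verify that homotopy invariance genuinely applies when the open set $(U_0)_t$ over which the index is taken varies with $t$, which is exactly what the absence of fixed points on $\partial U_0$ guarantees. A convenient way to make this rigorous is to regard the index as computed over the fixed open set $U_0$ for the parametrized map on $S\times[0,1]$, invoking the excision and homotopy properties of the Brouwer degree of $g$; the evaluation that the global index equals $1$ rests on the fact that a continuous self-map of a compact convex (hence acyclic) set has index equal to its Lefschetz number, namely $1$. The separation lemma (components equal quasi-components in a compact metric space) is the only other nontrivial ingredient, and it is standard.
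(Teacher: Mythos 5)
The paper does not prove this statement: it is quoted verbatim as a classical result (Browder's fixed point theorem on continuity of fixed points under deformation) and then used as a black box to obtain Corollary~3, so there is no in-paper proof to compare yours against. Your argument is the standard proof of this theorem and is correct in substance: the reduction via the separation lemma (components equal quasi-components in a compact Hausdorff space, plus a finite-intersection/compactness argument to extract a clopen set containing $S_0$ and missing $S_1$) is the right set-theoretic step, and the contradiction $1=\iota(0)=\iota(1)=0$ via the generalized homotopy invariance of the fixed point index over the section-varying open set $(U_0)_t$ is exactly how the delicate part is handled in the literature; your remark that the absence of fixed points on $\partial U_0$ is what licenses this invariance, and that the normalization $\iota(0)=1$ comes from the Lefschetz number of a self-map of a contractible compact ANR, is accurate. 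Two minor points worth tightening: first, the paper's statement says ``continuous correspondence'' but writes $s=f(s,t)$, so you are right to treat $f$ as a single-valued map (for a genuine upper semicontinuous convex-valued correspondence one would replace Brouwer by Kakutani and use the index for set-valued maps, but nothing else changes); second, when you invoke quasi-components you should spell out the finite-intersection argument that upgrades ``the quasi-component of each $x\in S_0$ misses $S_1$'' to a single clopen $G_0\supseteq S_0$ disjoint from $S_1$ -- this is standard but is the one place where a reader could object that a step is asserted rather than proved.
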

As a corollary of Theorem \ref{th1}, we come to the following result.
\begin{corollary}\label{co3}
$H^{-1}$ contains a connected subset $H_c^{-1}$ such that $\mathbb X\times \{0\}\cap H_c^{-1}\ne \emptyset$ and $\mathbb X\times \{1\}\cap H_c^{-1}\ne \emptyset$.
\end{corollary}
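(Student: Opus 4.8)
The plan is to apply Browder's fixed point theorem (Theorem~\ref{th1}) not to $\varphi$ itself, since $\varphi(x,t)=x-h(x,t)$ maps $\mathbb X\times[0,1]$ into $\mathbb R^n$ rather than into $\mathbb X$, but to its projection onto $\mathbb X$, and then to argue that the projected map and $\varphi$ share the same fixed-point set. Concretely, I would define $\Psi:\mathbb X\times[0,1]\to\mathbb X$ by letting $\Psi(x,t)$ be the unique minimizer of $\|y-\varphi(x,t)\|_2^2$ over $y\in\mathbb X$, i.e. the Euclidean projection of $\varphi(x,t)$ onto the compact convex set $\mathbb X$. This is exactly the map $\psi$ from the proof of Theorem~\ref{th0}, now carried along with $t$ as an argument. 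Since $\varphi$ is continuous on $\mathbb R^n\times[0,1]$ and the metric projection onto a nonempty compact convex set is continuous, $\Psi$ is a continuous map from $\mathbb X\times[0,1]$ into $\mathbb X$, which is precisely the input required by Theorem~\ref{th1}.

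With $\Psi$ in hand, I would invoke Theorem~\ref{th1} with $S=\mathbb X$, which is nonempty, compact and convex by \textbf{Assumption~\ref{a1}}, to produce a connected subset $G^{\rm c}$ of the fixed-point set $G=\{(x,t)\in\mathbb X\times[0,1]\,|\,x=\Psi(x,t)\}$ meeting both $\mathbb X\times\{1\}$ and $\mathbb X\times\{0\}$. It then remains only to identify $G$ with $H^{-1}$, after which $H_c^{-1}:=G^{\rm c}$ delivers the claim.

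The key step, and the only one that uses the structure of the problem, is the set equality $G=H^{-1}$. The inclusion $H^{-1}\subseteq G$ is immediate: if $x=\varphi(x,t)$ with $x\in\mathbb X$, then the projection of $\varphi(x,t)=x$ onto $\mathbb X$ is $x$ itself, so $x=\Psi(x,t)$. The reverse inclusion $G\subseteq H^{-1}$ is exactly the contradiction argument already carried out inside the proof of Theorem~\ref{th0}, read pointwise in $t$: if $x=\Psi(x,t)$ but $x\ne\varphi(x,t)$, then necessarily $x\in\partial\mathbb X$ and $\varphi(x,t)\notin\mathbb X$, and moving from $x$ a small amount toward the interior point $x^0$ strictly decreases the distance to $\varphi(x,t)$ by virtue of the boundary inequality~(\ref{as}), contradicting the minimality that defines $\Psi$. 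Applying this at every $t\in[0,1]$ yields $G\subseteq H^{-1}$, hence $G=H^{-1}$, so $G^{\rm c}\subseteq H^{-1}$ is a connected subset touching both levels $t=0$ and $t=1$, as required.

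I expect the conceptual crux, rather than any heavy computation, to be the recognition that Browder's theorem cannot be applied to $\varphi$ directly and that the projected self-map $\Psi$ must be used instead, together with the verification, reusing Theorem~\ref{th0}, that the projection introduces no spurious boundary fixed points. Once those two points are settled the corollary follows with essentially no further work, since everything else is a direct transcription of the hypotheses of Theorem~\ref{th1}.
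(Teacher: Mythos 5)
Your proof is correct and follows essentially the route the paper intends: the paper states the corollary as immediate from Theorem~\ref{th1}, and the details you supply --- applying Browder's theorem to the projected self-map $\Psi=\psi(\cdot)$ carried along with $t$, then identifying its fixed-point set with $H^{-1}$ via the boundary inequality~(\ref{as}) --- are exactly the ingredients already assembled in the proof of Theorem~\ref{th0}. You are right that Browder's theorem cannot be applied to $\varphi$ directly since it need not map $\mathbb X$ into itself; your explicit verification of $G=H^{-1}$ makes precise what the paper leaves implicit.
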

Corollary~\ref{co3} assures the global convergence of the GRSAA differentiable homotopy method.

\subsection{A GRSAA Differentiable Homotopy Path}
To develop an efficient GRSAA differentiable homotopy method for computing a solution to the SAA (\ref{ori}), we need to construct an everywhere smooth path that starts from $(x^0,1)$ and ends at a solution to the SAA (\ref{ori}) on the level of $t=0$. A common sufficient condition for the existence of such a smooth path is that zero is a regular value of a homotopy system. The next lemma shows that zero is a regular value of $h(x,1)$ at the starting point $(x^0,1)$.
\begin{lemma}\label{lem_2}
	At $t=1$, zero is a regular value of $h(x,1)$ on $\mathbb{X}$.
\end{lemma}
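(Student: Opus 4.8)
The plan is to exploit the fact that the coefficient $(1-t)$ multiplying the deformation mapping $d(x,t)$ vanishes at $t=1$. Substituting $t=1$ into the homotopy system~(\ref{s1}) collapses it to an affine map: the term $(1-t)d(x,t)$ disappears and we are left with $h(x,1)=x-x^0$. This observation reduces the lemma to an elementary calculation, so I do not anticipate any genuine obstacle here; the substance of the convergence argument lies elsewhere (in the regularity of $h$ for $t\in[0,1)$ and the path-following analysis), and this lemma merely certifies that the path has a well-behaved starting point.

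First I would identify the zero set. Since $h(x,1)=x-x^0$, the equation $h(x,1)=0$ has the unique solution $x=x^0$, which lies in ${\rm int}(\mathbb X)$ by the standing hypothesis that $x^0$ is an interior point of $\mathbb X$. In particular the preimage of zero intersected with $\mathbb X$ equals the singleton $\{x^0\}$, which is nonempty and contained in $\mathbb X$, so regularity is being checked over the relevant domain.

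Next I would verify the rank condition defining a regular value. Differentiating $h(x,1)=x-x^0$ with respect to $x$ yields the Jacobian $D_x h(x,1)=I_n$, the $n\times n$ identity matrix, which is nonsingular and hence of full rank $n$ for every $x\in\mathbb X$, in particular at $x^0$. By definition, zero is a regular value of $h(\cdot,1)$ precisely when $D_x h(x,1)$ is surjective at every point of the zero set; since $I_n$ is surjective, the conclusion follows immediately.

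The only point requiring any care---and it is a very mild one---is to confirm that the single zero $x^0$ genuinely lies in $\mathbb X$ (indeed in its interior), which is guaranteed directly by {\bf Assumption~\ref{a1}} together with the choice of $x^0$. No estimate, compactness argument, or appeal to the internal structure of $d(x,t)$ is needed, since $d(x,t)$ is annihilated by the factor $(1-t)$ at $t=1$.
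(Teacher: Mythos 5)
Your proof is correct and follows essentially the same route as the paper's: at $t=1$ the factor $(1-t)$ kills the term involving $d(x,t)$, so $h(x,1)=x-x^0$ has Jacobian $I_n$, which is nonsingular, and hence zero is a regular value. The extra observations you include (that the zero set is the singleton $\{x^0\}\subset{\rm int}(\mathbb X)$) are harmless elaborations of the same elementary argument.
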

\begin{proof}
Taking derivatives of $h(x,1)$ with respect to $x$, we obtain that the Jacobian matrix of $h(x,1)$  is an identity matrix. Thus zero is a regular value of $h(x,1)$. This completes the proof.
\end{proof}
We prove in the following theorem that, under the condition that zero is  a regular value of $h(x,t)=0$, the set $H^{-1}$ contains an everywhere smooth path leading to a solution to (\ref{ori}) as $t$ goes to zero. 
\begin{theorem}\label{conver}
	Suppose that zero is a regular value of $h(x,t)=0$ on $\mathbb R^n\times(0,1)$. Then there exists an everywhere smooth path in $H^{-1}$, which starts from the unique solution $(x^0,1)$ on the level of $t = 1$ and ends at a solution to (\ref{ori}) on the level of $t = 0$.
\end{theorem}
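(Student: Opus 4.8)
The plan is to run the standard differential-topology argument for homotopy continuation, using the regular-value hypothesis to endow $H^{-1}$ with a manifold structure and Assumption~\ref{a1} to keep the path bounded. First I would record that, by Corollary~\ref{co2}, $h$ is continuously differentiable, and that zero is a regular value of $h$ on $\mathbb{R}^n\times(0,1)$ by hypothesis. The parametrized implicit function theorem (preimage theorem) then shows that $H^{-1}\cap(\mathbb{R}^n\times(0,1))$ is a one-dimensional $C^1$ manifold. Adjoining the endpoint level $t=1$, where Lemma~\ref{lem_2} gives $D_x h(x^0,1)=I$ nonsingular, makes $H^{-1}$ a $1$-manifold with boundary whose boundary can only lie in the levels $t=0$ and $t=1$.

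Next I would analyze the starting point. At $t=1$ the system reduces to $h(x,1)=x-x^0$, whose unique zero is $x^0$, and since $D_x h(x^0,1)=I$ is invertible, the implicit function theorem yields a single smooth branch of solutions emanating from $(x^0,1)$ and meeting the level $t=1$ transversally. Hence $(x^0,1)$ is a boundary point of $H^{-1}$, and the connected component $P\subset H^{-1}$ containing it is, by the classification of one-dimensional manifolds, diffeomorphic to a half-open or closed interval.

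The crux is compactness, and this is where Assumption~\ref{a1} does the work. Inequality~(\ref{hxt}) gives $(x-x^0)^\top h(x,t)>0$ for every $x\notin\mathrm{int}(\mathbb{X})$ and every $t\in[0,1]$, so no zero of $h$ lies outside $\mathbb{X}$; consequently $P\subset\mathbb{X}\times[0,1]$, which is compact. A connected, compact $1$-manifold with boundary that has at least one boundary point must be diffeomorphic to the closed interval, so $P$ has exactly two boundary endpoints, both at $t\in\{0,1\}$. I would then identify the second endpoint: the endpoint at $t=1$ is forced to be $(x^0,1)$ by the uniqueness just established, and since the branch through $(x^0,1)$ is unique and transversal, the path cannot return to the level $t=1$; therefore the other endpoint $(x^*,0)$ lies at $t=0$. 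There $h(x^*,0)=d(x^*,0)=f^{L}(x^*)=\frac1N\sum_{i=1}^{N}f(x^*,\xi_i)$, so $x^*$ solves~(\ref{ori}). Smoothness on the interior follows from the regular-value manifold structure, and smoothness at the start from the implicit-function parametrization, giving an everywhere smooth path.

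The main obstacle is the third–fourth step taken together: one must simultaneously rule out escape to infinity and rule out a return of the path to the level $t=1$. The former is exactly the content of Assumption~\ref{a1} through~(\ref{hxt}), which confines $P$ to the compact set $\mathbb{X}\times[0,1]$ and upgrades the component from a half-open to a genuinely closed interval; the latter rests on the nonsingular Jacobian and uniqueness at $t=1$. Pinning down that the surviving far endpoint sits precisely on $t=0$ is the delicate bookkeeping that makes the continuation argument conclude, and care is needed to ensure no boundary point is hidden at an interior value of $t$, which the regularity assumption on $(0,1)$ precludes.
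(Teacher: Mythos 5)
Your proof is correct in substance, but it takes a genuinely different route from the paper's. The paper's own proof is very short: it invokes Corollary~\ref{co3} (which was obtained from Browder's fixed point theorem via the fixed-point reformulation $x=\varphi(x,t)$ of Theorem~\ref{th0}) to get, without any regularity, a connected component of $H^{-1}$ meeting both $\mathbb{R}^n\times\{1\}$ and $\mathbb{R}^n\times\{0\}$; uniqueness of the solution at $t=1$ pins down this component, and the regular-value hypothesis plus the implicit function theorem is then used only to upgrade the component to a smooth path. You instead run the classical differential-topology continuation argument: regular value theorem to make $H^{-1}$ a one-dimensional manifold on $(0,1)$, Lemma~\ref{lem_2} to attach the boundary point $(x^0,1)$, the classification of connected one-manifolds, and the bound~(\ref{hxt}) from Assumption~\ref{a1} to trap the component in $\mathbb{X}\times[0,1]$ and force its second endpoint onto $t=0$. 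Your route never needs Browder's theorem or Corollary~\ref{co3}, and it makes explicit the bookkeeping (no escape to infinity, no return to $t=1$, no termination at an interior $t$) that the paper leaves implicit; the paper's route buys global connectedness cheaply from the fixed-point machinery it has already built and so separates the topological existence question from the smoothness question. One small caution on your side: the regularity hypothesis is only on $\mathbb{R}^n\times(0,1)$, so the manifold-with-boundary structure does not extend to $t=0$ and you cannot literally apply the classification theorem to the closed component $P$ including its $t=0$ part; what you actually get is a half-open smooth arc whose closure, by compactness of $\mathbb{X}\times[0,1]$ and continuity of $h$, accumulates only at zeros of $f^L$ on the level $t=0$. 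This is exactly the sense in which the paper itself claims the path ``ends at a solution to~(\ref{ori})'', so the gap is no worse than the original, but it is worth stating.
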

\begin{proof}
 Corollary \ref{co3} tells us that $H^{-1}$ has a connected component that intersects both $\mathbb R^n\times\{1\}$ and $\mathbb R^n\times\{0\}$.  Since the system (\ref{s1}) has a unique solution $(x^0, 1)$ at $t=1$, the  connected component in $H^{-1}$ is unique. The condition of Theorem~\ref{conver} together with the well-known implicit function theorem ensures that  the connected component forms a smooth path, which starts from $(x^0,1)$ and ends at a solution to (\ref{ori}) on the level of $t=0$.
\end{proof}
This theorem relies on the condition that zero is a regular value of $h(x,t)$ on $\mathbb R^n\times(0,1)$. To get rid of this condition, a general approach is to subtract from $h(x,t)$ a perturbation term of $t(1-t)\alpha$, where $\alpha\in\mathbb{R}^n$ and $\|\alpha\|$ is sufficiently small. Subtracting from $h(x,t)$ the perturbation term, we get
$h(x,t;\alpha)=h(x, t) -t(1-t)\alpha = 0$. For any fixed $\alpha$, let $h_\alpha(x, t) = h(x,t)-t(1-t)\alpha$ and $H^{-1}_\alpha=\{(x,t)\in\mathbb{R}^n\times[0,1]\,|\,h_\alpha(x,t)=0\}$. As $t=0$ and $t=1$, the perturbation term disappears and $h(x,t)=h_\alpha(x,t)$. Clearly, as $\|\alpha\|$ goes to zero, the distance between $H^{-1}$ and $H^{-1}_\alpha$ approaches zero. Therefore, as $\|\alpha\|$ is sufficiently small, $H^{-1}_\alpha$ also contains a connected component intersecting both $\mathbb X\times \{0\}$ and $\mathbb X\times\{1\}$. These results together lead to the following theorem.

\begin{theorem}\label{gth}
	For generic $\alpha\in\mathbb{R}^n$ with sufficiently small $\|\alpha\|$, $H^{-1}_\alpha$ contains a unique smooth path starting from $(x^0,1)$.
\end{theorem}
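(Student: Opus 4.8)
The plan is to obtain the result from the parametric transversality theorem together with Theorem~\ref{conver} and Lemma~\ref{lem_2}, using the perturbation vector $\alpha$ as the transversality parameter. First I would introduce the augmented mapping $\hat h:\mathbb R^n\times(0,1)\times\mathbb R^n\to\mathbb R^n$ defined by $\hat h(x,t,\alpha)=h(x,t)-t(1-t)\alpha$ and verify that $0$ is a regular value of $\hat h$. This step is immediate: the block of the Jacobian of $\hat h$ corresponding to differentiation in $\alpha$ is $\partial\hat h/\partial\alpha=-t(1-t)I_n$, which is nonsingular for every $t\in(0,1)$, so the full derivative $D\hat h$ is surjective at every zero of $\hat h$, regardless of the behaviour of $D_{(x,t)}h$.

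Next I would invoke the parametric transversality theorem: since $0$ is a regular value of the family $\hat h$, the set of parameters $\alpha$ for which $0$ fails to be a regular value of the slice $h_\alpha(\cdot,\cdot)=\hat h(\cdot,\cdot,\alpha)$ has Lebesgue measure zero in $\mathbb R^n$. Intersecting this full-measure set of good parameters with the ball $\{\alpha:\|\alpha\|<\delta\}$ still leaves a set of full measure in the ball, hence a dense set of admissible $\alpha$ with $\|\alpha\|$ arbitrarily small; for any such generic $\alpha$, zero is a regular value of $h_\alpha$ on $\mathbb R^n\times(0,1)$.

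With interior regularity in hand, I would assemble the path. At the endpoints the perturbation vanishes, so $h_\alpha(x,0)=h(x,0)$ and $h_\alpha(x,1)=h(x,1)=x-x^0$; in particular Lemma~\ref{lem_2} still applies, $(x^0,1)$ is the unique zero at $t=1$ with Jacobian $I_n$, and the implicit function theorem produces a locally unique $C^1$ branch emanating from $(x^0,1)$. The discussion preceding the theorem guarantees that, for $\|\alpha\|$ small, $H^{-1}_\alpha$ still contains a connected component meeting both $\mathbb X\times\{0\}$ and $\mathbb X\times\{1\}$, so Theorem~\ref{conver} applies verbatim to $h_\alpha$ and yields an everywhere smooth path from $(x^0,1)$ to a zero at $t=0$. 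Uniqueness follows as in the proof of Theorem~\ref{conver}: because the zero at $t=1$ is unique and regular, the connected component through $(x^0,1)$ is unique, and the locally unique branch from the implicit function theorem extends to exactly this component.

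The step I expect to require the most care is the application of the transversality theorem in the presence of the limited smoothness of $d(x,t)$: Corollary~\ref{co2} only gives $C^1$ regularity of $h$ across the connection slices $t=t_\ell$, whereas the parametric transversality theorem here asks for $C^2$ data, since the excess dimension $\dim(\mathbb R^n\times(0,1))-n$ equals one. I would handle this by applying the transversality theorem separately on each open strip $\mathbb R^n\times(t_\ell,t_{\ell-1})\times\mathbb R^n$, on which $\hat h$ is $C^\infty$, and then intersecting the finitely many resulting full-measure parameter sets; a short additional argument, using that $h_\alpha$ is globally $C^1$ and that every interior connection level $t_\ell$ lies in $(0,1)$, is then needed to ensure that the path passes smoothly through the finitely many slices $t=t_\ell$ rather than terminating there.
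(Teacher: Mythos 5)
Your proposal follows essentially the same route as the paper's proof: both observe that the $\alpha$-block $-t(1-t)I_n$ of the augmented Jacobian gives full row rank on $\mathbb{R}^n\times(0,1)\times\mathbb{R}^n$, invoke the transversality theorem to conclude that zero is a regular value of $h_\alpha$ for almost all $\alpha$, and then combine Lemma~\ref{lem_2}, the implicit function theorem and the connectedness argument to extract the unique smooth path from $(x^0,1)$. Your additional strip-by-strip application of the transversality theorem to handle the connection levels $t=t_\ell$, where $h$ is only $C^1$, is a legitimate refinement of a point the paper's proof silently glosses over.
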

\begin{proof}
	As a mapping of $(x, t, \alpha)$ on $\mathbb{R}^n\times(0,1)\times\mathbb{R}^n$, the Jacobian matrix of $h(x,t;\alpha)$ is an $n \times (2n+1)$ matrix given by $[D_{x,t}h(x, t)\; -t(1- t)I_{n}]$, where $I_n$ is an $n\times n$ identity matrix. Therefore, the Jacobian matrix of $h(x,t;\alpha)$ has full row rank on $\mathbb{R}^n\times(0,1)\times\mathbb{R}^n$. This together with Lemma~\ref{lem_2} asserts that zero is a regular value of $h(x,t;\alpha)$ on $\mathbb{R}^n\times(0,1]\times\mathbb{R}^n$. We know from the well-known transversality theorem \cite{eaves1999general} that, for almost all $\alpha\in\mathbb{R}^n$, zero is also a regular value of $h_\alpha(x,t)$. Therefore, when $\|\alpha\|$ is sufficiently small, we derive  from the well-known implicit function theorem that the connected component in $H^{-1}_\alpha$ intersecting $\mathbb X\times\{1\}$ forms a unique smooth path starting from $(x^0,1)$. This completes the proof.
	\end{proof}
  Theorem~\ref{gth} ensures that one can follow the smooth path in $H^{-1}_\alpha$ starting from $(x^0,1)$ to find a solution to the SAA~(\ref{ori}). In our numerical implementation of the proposed method, we always set $\alpha=0\in\mathbb{R}^n$.

\section{A Gradually Reinforced SAA Simplicial Homotopy Method}\label{sec4}
To further evince the advantage of the GRSAA differentiable homotopy method,  
we describe in this section a gradually reinforced SAA (GRSAA) simplicial homotopy method. As an underlying triangulation of the method, we make use of the $D_2$-triangulation of $(0,1]\times\mathbb{R}^n$ with continuous refinement of grid size, whose definition and pivot rules are given in \cite{Dang1993}.
 Given $\tau_0>0$, let $t_\ell=1/(1 + \tau_0 \ell)$, $\ell=0,1,\ldots$. Then, $t_\ell$, $\ell=0,1,\ldots$, form a descent sequence with $t_0=1$ and $\lim\limits_{\ell\rightarrow\infty}t_\ell=0$. Given $\tau_1>0$, we define $q_\ell=\tau_1\ell$, $\ell=1,2, \ldots$. Therefore, as $\ell\rightarrow \infty$, $q_\ell\rightarrow\infty$.
Given the initial grid size $\varpi_0>0$ and $r_\ell\in\{1/j\;|\;j=1,2,\ldots\}$, let $\varpi_{\ell+1}=r_\ell\varpi_\ell$,  $\ell=0,1,\ldots$, which is the gird size of a simplex of the $D_2$-triangulation in $\{t_\ell\}\times\mathbb{R}^n$. Let $x^0\in\mathbb X$ be a given point. We still utilize the continuously differentiable function $\theta_\ell (t)$ defined in Section \ref{sec2} to form a simplicial homotopy system, 
\begin{equation}\label{sh}
\begin{array}{l}
g(t,x):=(1-t) \bar d(t,x)+t(x-x^0)=0,
\end{array}
\end{equation}
where $\bar d(t,x)=\theta(t)\dfrac{1}{q_\ell}\sum\limits_{i=1}^{q_\ell} f(x,\xi_i)+(1-\theta(t))\dfrac{1}{q_{\ell+1}}\sum\limits_{i=1}^{q_{\ell+1}} f(x,\xi_i)$ for $t_{\ell+1}\le t\le t_\ell$. Clearly, when $t=1$, the system (\ref{sh}) has a unique solution $x^0$. As follows, we exploit the homotopy mapping $ g(t,x)$ to attain a GRSAA simplicial homotopy method.

Let $v^c=(0,x^0-\frac{\varpi_0}{n+1}e)$ with $e=(1,1,\ldots,1)^{\top}\in\mathbb{R}^{n}$.
Let $D_2$ be the collection of all simplices of the $D_2$-triangulation of $(0,1]\times\mathbb{R}^n$ after the translation of $v^c$, where the translation ensures that $(1, x^0)$ is in the interior of a unique simplex in $\{1\}\times\mathbb{R}^n$. We denote a  $q$-dimensional simplex in $D_2$ by $\sigma=\langle v^0,v^1,\ldots,v^{q}\rangle$, where $v^i$ is a vertex of $\sigma$ for $i=0,1,\ldots,q$.
\begin{definition}\label{d2}
 $\sigma=\langle v^0,v^1,\ldots,v^{n}\rangle\in D_2$ is a complete simplex if the system,{\small
 \begin{equation}\label{ls1}\sum\limits_{l=0}^{n}\zeta_l
 \left(\begin{array}{c}
 1\\
g(v^l)
 \end{array}\right)  =\left(\begin{array}{c}
 1\\
 0
 \end{array}\right)\text{ and }\zeta\ge 0,
 \end{equation}}
 has a solution.
 \end{definition}

\begin{assumption}[Nondegeneracy Assumption]\label{a2}
The system~(\ref{ls1}) has a unique solution with $\zeta>0$.\footnote{Note that this assumption can be eliminated if the lexicographic pivoting rule in Eaves~\cite{eaves1971linear} and Todd~\cite{todd1976} is applied in the linear programming step of the following method.}
\end{assumption}
Following a similar argument to that in \cite{chen2019differentiable}, we know that
there is a unique complete simplex contained in $\{1\}\times\mathbb{R}^n$. Given these notations, a simplicial homotopy method for computing an approximate solution to the SSE can be stated as follows.
 \begin{description}
 \item[Step 0:] Let $\delta_0$ be a given sufficiently small positive number. Let $\tau_0=\langle v^0,v^1,\ldots,v^{n}\rangle$ be the unique complete simplex in $\{1\}\times\mathbb{R}^n$ with $v^0=(1,x^0)$ and $\sigma_0$ be the unique $(n+1)$-dimensional simplex in $[t_1,1]\times \mathbb{R}^n$ with $\tau_0$ as its facet. Let $v^+$ be the vertex of $\sigma_0$ opposite to $\tau_0$. Let $\ell=0$ and go to Step 1.
 \item[Step 1:]Perform a linear programming step to bring $\left(\begin{array}{c}
 1\\
 g(v^+)
 \end{array}\right)$ into the system,
 \(\sum\limits_{l=0}^{n}\zeta_l
 \left(\begin{array}{c}
 1\\
 g(v^l)
 \end{array}\right)
  =\left(\begin{array}{c}
 1\\
 0
 \end{array}\right).
 \)
Suppose that $\left(\begin{array}{c}
 1\\
 g(v^l)
 \end{array}\right)$ leaves the system. Let $\tau_{\ell+1}$ be the facet of $\sigma_\ell$ opposite to $v^l$ and $v^l=v^+$.  Go to Step 2.
\item[Step 2:]  If $\tau_{\ell+1}\subset \{t_{\ell+1}\}\times\mathbb{R}^n$ and $t_{\ell+1}\le\delta_0$, the method terminates. Otherwise, let $\sigma_{\ell+1}$ be the unique simplex that shares together with $\sigma_\ell$ a common facet $\tau_{\ell+1}$. Let $v^+$ be the vertex of $\sigma_{\ell+1}$ opposite to $\tau_{\ell+1}$ and $\ell=\ell+1$. Go to Step 1.
\end{description}
As follows, we show that the simplicial path generated by the GRSAA simplicial homotopy method leads to a solution to the SSE (\ref{s0}). Let
\( g^{-1}(0)=\{(t,x)\in [0,1]\times \mathbb{R}^n\;|\;g(t,x)=0\}.\)  Then it follows from {\bf Assumption~\ref{a1}} that $g^{-1}(0)$ is a compact set. As a result of  Definition \ref{d2} and the continuity of $f$, it is easy to verify that all the complete simplices are contained in a bounded set of $[0,1]\times \mathbb{R}^n$. Under the nondegeneracy assumption, following the same argument as that in Todd~\cite{todd1976}, one can derive that all the simplices generated by the GRSAA simplicial homotopy method are different, that is, no cycle will occur. Let $\tau_
\ell$, $\ell=1,2,\ldots$, be the complete simplices generated by the method. Let $\bar t_\ell$ be the smallest value in $(0,1]$ such that $\tau_\ell\subset (0, \bar t_\ell]\times \mathbb{R}^n$. Since all $\tau_
\ell$, $\ell=1,2,\ldots$, are contained in a bounded set, we must have $\bar t_\ell\rightarrow 0$ as $\ell\rightarrow\infty$.

Let $\sigma=\{t\}\times \langle x^0,x^1,\ldots,x^{n}\rangle$ be an $n$-dimensional simplex in $\{t\}\times \mathbb{R}^n$. For $y=\sum\limits_{l=0}^{n}\zeta_l(y^l)$ with $\zeta_l\ge 0$ and $\sum\limits_{l=0}^{n}\zeta_l=1$, a piecewise linear approximation of $g(t,y)$ is given by
\(\bar g(t,y)=\sum\limits_{l=0}^{n}\zeta_l g(t,y^l).\)
 \begin{lemma}\label{as1}
 For $t\in (0, 1]$, let $\sigma^*=\{t\}\times \langle y^0,y^1,\ldots,y^{n}\rangle$ be a complete simplex in $\{t\}\times \mathbb{R}^n$ with $y^l\in \mathbb{R}^n$. Let $\zeta^*$ be the corresponding unique solution of the system~(\ref{ls1}). Then, $(y^a(t))=\sum\limits_{l=0}^{n}\zeta^*_ly^l\in \mathbb{R}^n$ is a zero point of $\bar g(t,\cdot)$.
 \end{lemma}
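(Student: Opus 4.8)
The plan is to unwind the two definitions that enter the statement — that of a complete simplex in (\ref{ls1}) and that of the piecewise linear approximation $\bar g$ — and to observe that the coefficient vector $\zeta^*$ plays a double role. First I would record exactly what completeness of $\sigma^*$ supplies. Since the vertices of $\sigma^*$ are $v^l=(t,y^l)$, we have $g(v^l)=g(t,y^l)$, so the system (\ref{ls1}) for $\sigma^*$ splits into its first coordinate $\sum_{l=0}^{n}\zeta^*_l=1$ and its remaining $n$ coordinates $\sum_{l=0}^{n}\zeta^*_l\, g(t,y^l)=0$, with $\zeta^*\ge 0$. These two facts are all I will need.

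Next I would identify $\zeta^*$ as the barycentric-coordinate vector of the point $y^a(t)$. By construction $y^a(t)=\sum_{l=0}^{n}\zeta^*_l y^l$ is a convex combination of $y^0,\dots,y^n$ whose weights $\zeta^*_l$ are nonnegative and sum to one. Because $y^0,\dots,y^n$ are the vertices of an $n$-dimensional simplex in $\mathbb{R}^n$, they are affinely independent, so the barycentric representation of any point in their convex hull is unique; hence the weights that appear when the definition of $\bar g(t,\cdot)$ is applied at $y^a(t)$ are precisely $\zeta^*$, and no other weight vector can be used.

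Finally, substituting into the defining formula $\bar g(t,y)=\sum_{l=0}^{n}\zeta_l g(t,y^l)$ with $y=y^a(t)$ and weights $\zeta^*$ gives $\bar g(t,y^a(t))=\sum_{l=0}^{n}\zeta^*_l\, g(t,y^l)$, which is exactly the second block of (\ref{ls1}) and therefore equals zero. This shows that $y^a(t)$ is a zero point of $\bar g(t,\cdot)$, completing the argument.

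The proof is essentially a bookkeeping exercise rather than an analytic one, so I do not anticipate a genuine obstacle. The single point that deserves a word of care is the uniqueness of the barycentric coordinates: it is what guarantees that evaluating $\bar g(t,\cdot)$ at $y^a(t)$ reproduces precisely the weights $\zeta^*$ rather than some other convex combination, and this is why I would explicitly invoke the affine independence of the vertices of the full-dimensional simplex $\sigma^*$.
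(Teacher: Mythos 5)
Your proof is correct and follows essentially the same route as the paper's: extract $\sum_{l=0}^{n}\zeta^*_l g(t,y^l)=0$ from (\ref{ls1}) and substitute into the definition of $\bar g$ at $y=y^a(t)$. Your explicit remark that affine independence of the vertices forces the barycentric weights of $y^a(t)$ to be exactly $\zeta^*$ is a small point of care the paper leaves implicit, but it does not change the argument.
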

 \begin{proof}
 It follows from the system~(\ref{ls1})  that
 $\sum\limits_{l=0}^{n}\zeta^*_l g(t,y^l)=0$.
  Thus,
 $\bar g(t,y^a(t))=\sum\limits_{l=0}^{n}\zeta^*_l g(t,y^l)=0$.
 Therefore, $y^a(t)\in \mathbb{R}^n$ is a zero point of $\bar g(t,\cdot)$.
 \end{proof}

 Since $g(t,y)$ is uniformly continuous on $[0,1]\times \mathbb{R}^n$,  there is a constant $\rho_0>0$ such that $\|g(t,y)-g(\hat t,\hat y)\|\le\rho_0 \|(t,y)-(\hat t,\hat y)\|$ for any $(t,y)$ and $(\hat t,\hat y)$ in $[0,1]\times \mathbb{R}^n$.
 \begin{lemma} \label{as2} For $t\in (0, 1]$, let $y^a(t)\in \mathbb{R}^n$ be a zero point of $\bar g(t,\cdot)$.
   Then,
 $\|g(t,y^a(t))\|\le\rho_0 \varpi(t)$, where $\varpi(t)$ is the gird size of the triangulation restricted on $\{t\}\times \mathbb{R}^n $.
 \end{lemma}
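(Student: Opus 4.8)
The plan is to exploit the defining property of $y^a(t)$ as a zero of the piecewise linear approximation $\bar g(t,\cdot)$ together with the uniform Lipschitz estimate furnished by $\rho_0$. First I would locate $y^a(t)$ inside a simplex $\sigma^*=\{t\}\times\langle y^0,y^1,\ldots,y^n\rangle$ and express it in barycentric coordinates as $y^a(t)=\sum_{l=0}^{n}\zeta^*_l y^l$ with $\zeta^*_l\ge 0$ and $\sum_{l=0}^{n}\zeta^*_l=1$. By the definition of $\bar g$ and the hypothesis that $y^a(t)$ is a zero point (as in Lemma~\ref{as1}), this gives $\bar g(t,y^a(t))=\sum_{l=0}^{n}\zeta^*_l g(t,y^l)=0$.

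The key step is to subtract this vanishing quantity from $g(t,y^a(t))$. Since the barycentric weights sum to one, I can write
\[
g(t,y^a(t))=g(t,y^a(t))-\bar g(t,y^a(t))=\sum_{l=0}^{n}\zeta^*_l\left[g(t,y^a(t))-g(t,y^l)\right].
\]
Taking norms and applying the triangle inequality yields $\|g(t,y^a(t))\|\le\sum_{l=0}^{n}\zeta^*_l\|g(t,y^a(t))-g(t,y^l)\|$. Because $y^a(t)$ and every vertex $y^l$ share the same first coordinate $t$, the Lipschitz inequality $\|g(t,y)-g(\hat t,\hat y)\|\le\rho_0\|(t,y)-(\hat t,\hat y)\|$ collapses to $\|g(t,y^a(t))-g(t,y^l)\|\le\rho_0\|y^a(t)-y^l\|$.

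Finally, I would invoke the geometry of the triangulation: since $y^a(t)$ lies in the convex hull of the vertices $y^0,\ldots,y^n$ and $\varpi(t)$ is the grid size of the triangulation restricted to $\{t\}\times\mathbb{R}^n$, each distance $\|y^a(t)-y^l\|$ is bounded above by $\varpi(t)$. Substituting this and using $\sum_{l=0}^{n}\zeta^*_l=1$ gives $\|g(t,y^a(t))\|\le\rho_0\varpi(t)$, as required. I expect the only point needing care is this last geometric bound, namely that the distance from a point of a simplex to any of its vertices cannot exceed the grid size; this follows because the map $y\mapsto\|y-y^l\|$ is convex and hence attains its maximum over the simplex at a vertex, so $\|y^a(t)-y^l\|\le\max_j\|y^j-y^l\|$, which is at most the diameter of the simplex measured by $\varpi(t)$. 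Once this is noted, the remainder of the argument is entirely routine.
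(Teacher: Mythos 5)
Your proposal is correct and follows essentially the same argument as the paper: write $g(t,y^a(t))$ as $g(t,y^a(t))-\bar g(t,y^a(t))$, expand via the barycentric weights, apply the triangle inequality and the Lipschitz constant $\rho_0$, and bound the vertex distances by the grid size $\varpi(t)$. Your extra remark justifying the final geometric bound via convexity of $y\mapsto\|y-y^l\|$ is a detail the paper leaves implicit, but the route is identical.
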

 \begin{proof}  Assume that $(t,y^a(t))\in\sigma^*=\{t\}\times \langle y^0,y^1,\ldots,y^{n}\rangle$. Then, {\small\[\begin{array}{rl}
  &\|g(t,y^a(t))\| = \|g(t,y^a(t))-\bar g(t,y^a(t))\|
  = \|\sum\limits_{l=0}^{n}\zeta^*_l (g(t,y^a(t))-g(t,y^l)) \| \\
  
  \le & \sum\limits_{l=0}^{n}\zeta^*_l\|g(t,y^a(t))- g(t,y^l)\| \le \sum\limits_{l=0}^{n}\zeta^*_l\rho_0\|y^a(t)-y^l\|
  \le  \rho_0 \varpi(t).
  \end{array}\]}
  The proof is completed.
 \end{proof}
This lemma implies that every limit point of the simplicial path yields a solution to the SSE (\ref{s0}) with probability one as $t$ goes to zero.

\section{Numerical Performance}\label{sec5}
We apply in this section the GRSAA differentiable homotopy method to solve several applications of the SSE (\ref{s0}), which include a stochastic market equilibrium problem and a stochastic variational inequality problem. To numerically trace the smooth path, we employ a standard predictor-corrector method.\footnote{Interested readers can refer to \cite{allgower2012numerical,eaves1999general} for more details about the predictor-corrector method.} Moreover, we have made numerical comparisons of the GRSAA differentiable homotopy method with the GRSAA simplicial homotopy method and a standard  differentiable homotopy method. All these methods are coded in MatLab(R2019a). The computation has been carried out on a  2.00 GHz Windows PC with CORE i7.  The numerical results further confirm the effectiveness and efficiency of the GRSAA differentiable homotopy method. 

\subsection{A Stochastic Market Equilibrium Problem}
This subsection is concerned with a stochastic market equilibrium problem.  Suppose that there are three goods and two firms in an economy. The consumers in the economy can be represented by one agent, who has a constant elasticity of substitution (CES) utility function with a stochastic substitution parameter, $u(x,y,z)=(2x^\xi+3y^\xi+z^\xi)^{1/\xi}$, where $x$, $y$ and $z$ are the amounts of the three goods being consumed. The initial endowment is given by $w= (1,1,1)^\top$. In the economy, the agent wants to maximize her utility by determining an optimal consumption plan. A direct application of the sufficient and necessary optimality conditions to the convex optimization problem for the agent yields the excess demand of the agent at the market price $p=(p_x,p_y,p_z)^\top$, 
\begin{equation*}
\begin{array}{l}
f(p,\xi)= p^\top w\left(\dfrac{p_1}{g_1}, \dfrac{p_2}{g_2}, \dfrac{p_3}{g_3}\right)^\top,
\end{array}
\end{equation*} 
where $g_1=p_1^\xi+\frac{2}{3}^{\frac{1}{\xi-1}}p_2^\xi+ 2^{\frac{1}{\xi-1}} p_3^{\xi}$, $g_2= \frac{3}{2}^{\frac{1}{\xi-1}} p_1^\xi+p_2^\xi+ 3^{\frac{1}{\xi-1}}p_3^\xi$, and $g_3= \frac{1}{2}^{\frac{1}{\xi-1}}p_1^\xi+\frac{1}{3}^{\frac{1}{\xi-1}}p_2^\xi+ p_3^\xi$ with $p_1 =p_x^{\frac{1}{\xi-1}}$, $p_2 =p_y^{\frac{1}{\xi-1}}$, and $p_3 =p_z^{\frac{1}{\xi-1}}$. The production technology of firms are described by the following matrix,
\[A =
\begin{bmatrix}
-\dfrac{3}{2}&1&1\\
-1&-\dfrac{77}{27}&\dfrac{11}{9}
\end{bmatrix}.\]
Since no firm makes a positive profit in an equilibrium, we have the constraint $Ap\le 0$.  Assume that $p_x+p_y+p_z\le 1$.\footnote{This constraint  ensures that the feasible set is a compact polytope.} Then the feasible set reads as $P = \{p\in\mathbb R^3_+\,:\, Ap\le 0,\,e^\top p \le 1\}$, where $e=(1,1,1)^\top$. We say an equilibrium is reached if and only if  there exist vectors $z\in \mathbb R^6$ and $s\in\mathbb R^6$ together with $p$ satisfying that $\mathbb E_\xi f(p,\xi)-B^\top z = 0$, $B p + s- b=0$, and $zs = 0$, where $B = [A;\;-I_n;\;e^\top]\in\mathbb R^{6\times 3}$ and $b = (0,0,0,0,0,1)^\top$ \cite{van1994adjustment,zhan2020differentiable}.  

The corresponding GRSAA differentiable homotopy method for this problem is as follows. We set the sample size $N=10^4$. After randomly generating a batch of samples $\xi_1, \xi_2, \ldots, \xi_{N}$ of the stochastic variable $\xi$ from the uniform distribution on $[-1,1]$, one can approximate the expected value $\mathbb E_\xi f(p,\xi)$ by an SAA scheme, $\dfrac{1}{N}\sum\limits_{i=1}^{N}f(p,\xi_i)$. Let $t_0=1$, $t_L=0$ and $t_\ell$, $\ell=1,2,\ldots,L-1$, be randomly generated from $(0, 1)$ with $t_\ell<t_{\ell-1}$. We make up the following unconstrained convex optimization problem,
\begin{equation}\label{mre}
\begin{array}{ll}
\max\limits_{x\in\mathbb R^3}&(1-t)x^\top(d(p,t)+t^{\kappa_0} \sum\limits_{i=1}^6\text{log}(b_i-B_i x))-\dfrac{t}{2}\|x-p^0\|^2,\\
\end{array}
\end{equation}
where $\kappa_0\ge 2$, $p^0$ is any given interior point in $P$ and $d(p,t)=d^\ell(p,t)$ for $t\in[t_\ell,t_{\ell-1}]$ with $d^\ell(p,t)$ as  defined in Section \ref{sec2}.  An application of the optimality conditions to the problem~(\ref{mre}) together with a fixed point argument leads to the following GRSAA differentiable homotopy system,
\begin{equation}\label{mes}
\begin{array}{l}
(1-t)(d(p,t)-B^\top z(y))-t(p-p^0) = 0,\\
\noalign{\vskip 6pt}
Bp+s(y)-b = 0,
\end{array}
\end{equation}
where  $z(y)=\left(\dfrac{\sqrt{y^2+4t}-y}{2}\right)^{\kappa_0}$ and $s(y)=\left(\dfrac{\sqrt{y^2+4t}+y}{2}\right)^{\kappa_0}$ with $y\in \mathbb R^6$ being a new variable.\footnote{A well-chosen transformation of variables can significantly reduce the number of variables and constraints so that numerical efficiency can be greatly improved. This technique has been frequently used in the literature such as \cite{chen2019differentiable,herings2006computing}.}  Therefore, there exists a smooth path contained in the set of solutions to the system (\ref{mes}), which starts from a unique solution on the level of $t=1$ and ends at an approximate equilibrium for the original market equilibrium problem on the level of $t=0$. By applying a predictor-corrector method to trace the smooth path, we eventually reach a solution $p=(0.40, 0.45, 0.15)$ in 12 iterations and 0.6096 seconds.  Figure \ref{f11} shows the distances from the current point to the desired solution of the original problem at each iteration for the GRSAA differentiable homotopy method. The changes of different variables in iterations are illustrated in Figure \ref{f12}.
\begin{figure}[!htbp]
\centering
\includegraphics[height=5cm,width=9cm]{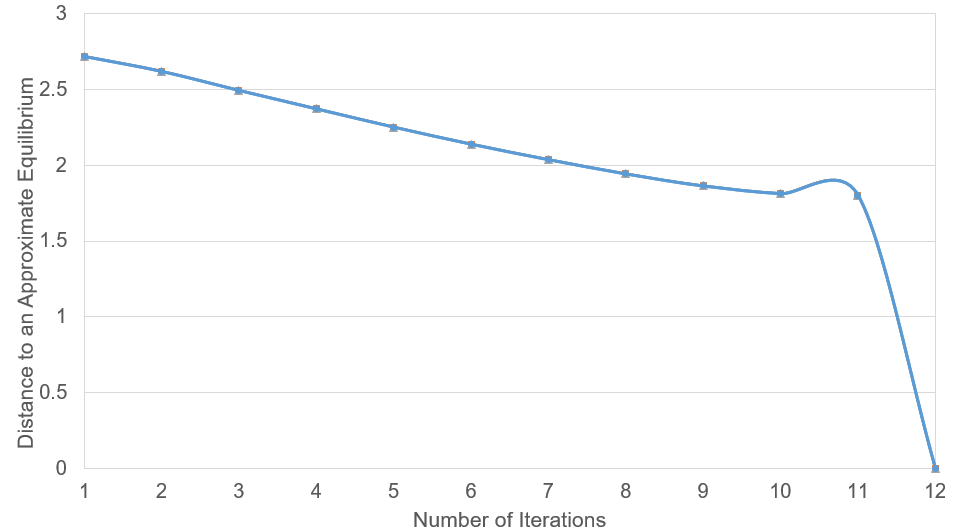}
 \caption{Numerical Results for the Market Equilibrium Problem}\label{f11}
\end{figure}

\begin{figure}[!htbp]
\centering
\includegraphics[height=7cm,width=9cm]{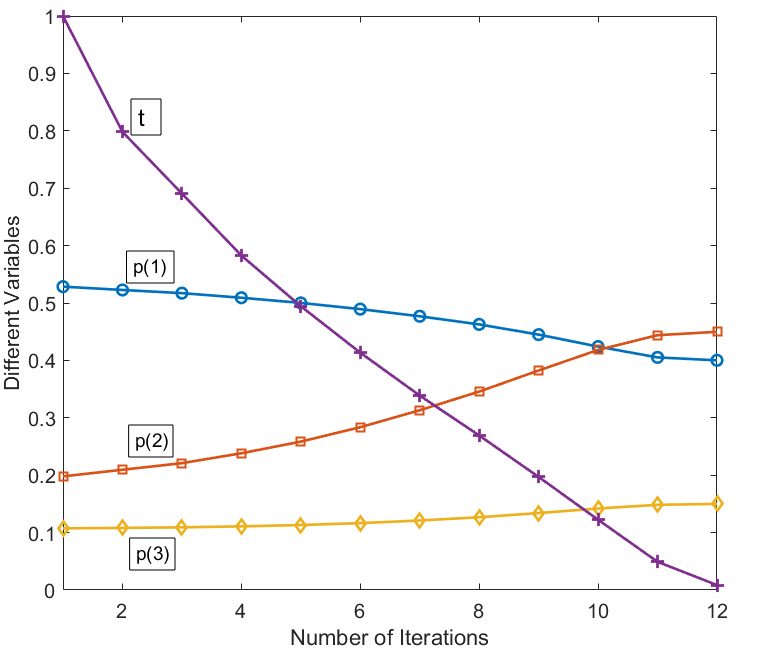}
 \caption{Changes of Variables in Iterations}\label{f12}
\end{figure}

\subsection{A Stochastic System of Equations}

A simplicial homotopy method was proposed in \cite{dang1991d1} to compute a solution to a deterministic system of equations, $x_i-5\sin(i\sum\limits_{j=1}^nx_j)=0$, $i=1,2,\ldots,n$. This subsection focuses on a stochastic version of this problem, $\mathbb E_\xi [f(x,\xi)]=0$, where the mapping $f\,:\,\mathbb R^n\rightarrow \mathbb R^n$ is given by 
\begin{equation}\label{exm2}
\begin{array}{l}
f_i(x,\xi)=x_i-5\sin(i\sum\limits_{j=1}^nx_j+\xi),
\end{array}
\end{equation}
$i=1, 2, \ldots, n$.  
We employ the GRSAA differentiable homotopy method to solve the SSE (\ref{exm2}) under different $n\in\{3, 4, 5, 6, 7, 8, 9, 10\}$. For numerical comparisons, we also apply the GRSAA simplicial homotopy method to solve the same problems.  In the implementation of the GRSAA simplicial homotopy method, we set the initial grid size of the $D_2$-triangulation $\bar\omega_0=1$, the factor to refine the grid size $r_\ell=0.5$ for all $\ell$, $t_0=1$, $t_1=0.5$, $t_\ell = 1/(1+7000 \ell)$ for $\ell = 2, 3, \ldots$.   Additionally, we choose the sample size $q_\ell=500\ell$ for $\ell = 1, 2, \ldots$. Clearly,  $t_\ell\rightarrow 0$ and $q_{\ell}\rightarrow\infty$ as $\ell\rightarrow \infty$. Moreover, to make the comparisons more convincing, $t_\ell$, $\ell=0,1,\ldots$, for the GRSAA differentiable homotopy method are consistent with those for the GRSAA simplicial homotopy method and both methods start from the same randomly generated starting point.   Each case with different $n$ is tested for 20 times and the average computational costs are reported in Table \ref{t21}, where ``ITER'' is the average number of iterations, ``TIME'' is the average computational time (in seconds), ``GRSAA-DH'' and ``GRSAA-SH''  represent respectively the GRSAA differentiable homotopy method and GRSAA simplicial homotopy method, and ``RATIO'' stands for the ratio of the numerical results of GRSAA-DH to GRSAA-SH.\footnote{Note that if  the computational time exceeds 1000 and the number of iterations is larger than $10^7$, we record the value as ``INF''.}
\begin{table}[!htbp]
\centering
\caption{Average Computational Cost of Two Methods}\label{t21}
\renewcommand\arraystretch{1.3}
\setlength{\tabcolsep}{4.5mm}{
\begin{tabular}{|c|c|c|c|c|c|c|}
\hline
$n$&\multicolumn{2}{c|}{\textbf{GRSAA-DH}} & \multicolumn{2}{c|}{GRSAA-SH}& \multicolumn{2}{c|}{RATIO (\%)}\\
\hline
&ITER&TIME&ITER&TIME&ITER&TIME\\
\hline
3&\textbf{78}&\textbf{2.72}&1181&2.58&6.60&105.42\\
\hline
4&\textbf{96}&\textbf{3.76}&3707&11.30&2.59&33.27\\
\hline
5&\textbf{98}&\textbf{4.71}&5738&12.88&1.71&36.56\\
\hline
6&\textbf{105}&\textbf{5.49}&14797&32.65&0.71&16.81\\
\hline
7&\textbf{108}&\textbf{6.54}&34011&59.83&0.32&10.93\\
\hline
8&\textbf{123}&\textbf{8.53}&78253&99.11&0.16&8.61\\
\hline
9&\textbf{129}&\textbf{9.45}&202596&198.30&0.06&4.76\\
\hline
10&\textbf{272}&\textbf{16.33}&INF&INF&-&-\\
\hline
\end{tabular}}
\end{table}

From the columns of Table \ref{t21}, we find that both the average number of iterations (ITER) and the average computational time (TIME) become greater and greater with the increasing of $n$ for the two methods. This result coincides with our intuition, that is, the larger the problem is, the harder it is for the methods to solve.  From the rows of Table \ref{t21}, we observe that the  GRSAA differentiable homotopy method significantly outperforms the GRSAA simplicial homotopy method both in the number of iterations and computational time when $n>3$.  The advantage of the GRSAA-DH method over the GRSAA-SH method becomes more remarkable when the number of variables $n$ is higher, which implies that the GRSAA-DH method is relatively less sensitive to $n$. 

Figure \ref{f21} presents the changes of the maximal, average and minimal computational time in $n$ among the 20 tests for the two methods. Figure \ref{f22} illustrates the changes of the maximal, average and minimal number of iterations in $n$ for the two methods.
\begin{figure}[!htbp]
\includegraphics[height=0.4\textwidth]{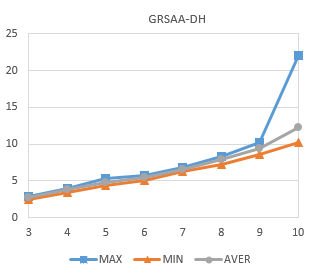}\quad
\includegraphics[height=0.4\textwidth]{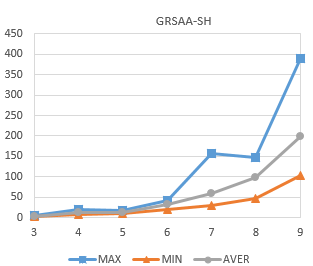}
 \caption{Computational Time for GRSAA-DH and GRSAA-SH}\label{f21}
\end{figure}
\begin{figure}[!htbp]
\includegraphics[height=0.4\textwidth]{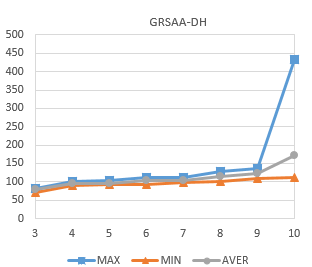}\quad
\includegraphics[height=0.4\textwidth]{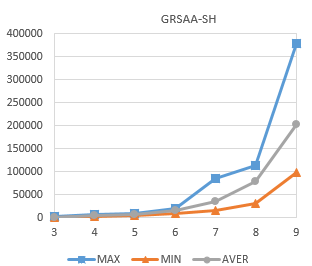}
 \caption{Number of Iterations for GRSAA-DH and GRSAA-SH}\label{f22}
\end{figure}

Regarding the GRSAA-DH method, one can observe from Figure \ref{f21} and Figure \ref{f22} that  as $n\le 9$, the difference between the maximal computational time and the minimal computational time is fairly small. As $n=10$, there are some extreme values that enlarge the gap between the maximal value and the minimal value, but the number of such extreme cases are quite few, since the average computational time is just about 12 seconds, which is near the minimal computational time of 10 seconds. This indicates that the computational time for most tests is between 10 and 12 seconds. However, for the GRSAA-SH method, when $n>6$, one can easily see that the gaps among the maximal, average and minimal computational time are much larger. Similar phenomena can be found for the number of iterations. These numerical results show that the GRSAA-DH method performs much more stable than the GRSAA-SH method. 

To further demonstrate the advantage of the GRSAA-DH method, we have used the method to solve several large-scale SSEs, for which the GRSAA-SH method fails to find a reasonable approximate solution after the computational time is over 5000 seconds. We have run the GRSAA-DH method on each test 10 times. The numerical results are reported in Table \ref{t22} and Figure \ref{f25}, which once again certify that the GRSAA differentiable homotopy method is able to effectively and efficiently solve larger-scale problems.
\begin{table}[!htbp]
\centering
\caption{Computational Time of GRSAA-DH for Large-Scale Cases}\label{t22}
\renewcommand\arraystretch{1.4}
\setlength{\tabcolsep}{7mm}{
\begin{tabular}{c|c|c|c}
\hline
$n$&MAX&MIN&\textbf{AVER}\\
\hline
15&55.99&19.65&\textbf{36.389}\\
\hline
16&476.85&52.27&\textbf{219.771}\\
\hline
17&774.83&102.06&\textbf{416.047}\\
\hline
18&1367.73&110.96&\textbf{559.361}\\
\hline
19&1786.92&244.85&\textbf{729.847}\\
\hline
20&1247.82&464.09&\textbf{821.583}\\
\hline
21&1571.14&325.18&\textbf{985.259}\\
\hline
22&1811.67&380.60&\textbf{1230.041}\\
\hline
23&2133.52&597.04&\textbf{1317.084}\\
\hline
24&2214.6&911.96&\textbf{1396.252}\\
\hline
25&2826.06&1214.31&\textbf{1479.845}\\
\hline
\end{tabular}}
\end{table}
\begin{figure}[!htbp]
\centering
\includegraphics[height=0.4\textwidth]{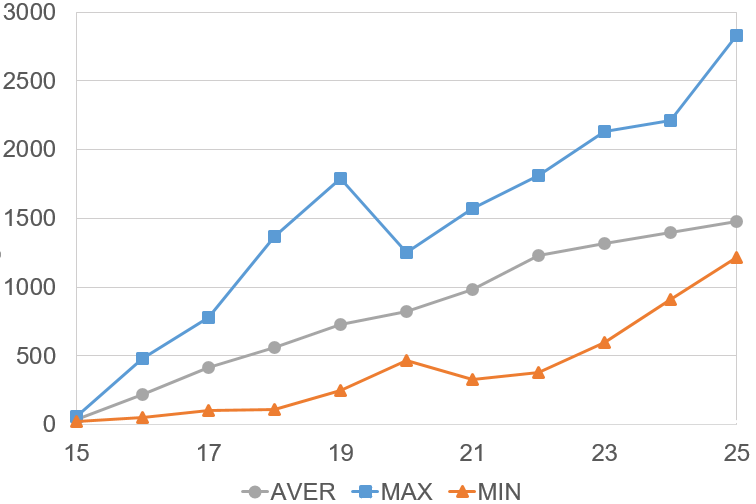}
 \caption{Computational Time for GRSAA-DH for Different Cases}\label{f25}
\end{figure}

\subsection{A Stochastic Variational Inequality}
This subsection intends to solve a stochastic variational inequality (SVI) problem, 
\begin{equation}\label{exm31}
\begin{array}{ll}
(y-x)^\top \mathbb E_\xi[f(x,\xi)]\le 0,&\forall\,y\in \mathbb Y,
\end{array}
\end{equation}
where $f\,:\,\mathbb R^n\rightarrow \mathbb R^n$ is a continuously differentiable mapping with $f_i(x,\xi)=x_i-e^{\text{cos}(i\sum_{j=1}^nx_j+\xi)}$ for $i=1,2,\ldots,n$, and $\mathbb Y=\{y=(y_1,\ldots,y_n)^\top\in\mathbb R^n\,|\,-10\le y_i\le 10,\, \forall \, i=1,2,\ldots,n\}$. To get a GRSAA differentiable homotopy system for the SVI problem, we constitute the optimization problem,
\begin{equation*}
\begin{array}{ll}
\max\limits_{y\in\mathbb R^n} &y^\top\mathbb E[f(x,\xi)]\\
\text{s.t.}&Cy\le b,
\end{array}
\end{equation*}
where $C=[I_n\;-I_n]^\top \in\mathbb R^{2n\times n}$ and $b=10(e^\top,-e^\top)^\top\in\mathbb R^{2n}$ with $I_n\in\mathbb R^{n\times n}$ being an identity matrix and $e=(1,1,\ldots,1)^\top\in\mathbb R^n$. Applying the optimality conditions to this optimization problem, we derive with a fixed point argument  an equivalent problem to  the SVI problem~(\ref{exm31}),
\begin{equation}\label{exm32}
\begin{array}{l}
\mathbb E[f(x,\xi)]-C^\top\lambda=0,\\
\noalign{\vskip 6pt}
Cx-b+z=0,\\
\noalign{\vskip 6pt}
\lambda z=0,\quad \lambda\ge 0,\quad z\ge 0.
\end{array}
\end{equation} 
We have generated numerous samples for the stochastic variable $\xi$ and applied the SAA, $\dfrac{1}{N}\sum\limits_{i=1}^N f(x,\xi_i)$, to estimate the expected value $\mathbb E[f(x,\xi)]$. The corresponding GRSAA differentiable homotopy system to the SSE (\ref{exm32}) is as follows:
\begin{equation}\label{exm33}
\begin{array}{l}
\mathbb (1-t)(d(x,t)-C^\top\lambda)-t(x-x^0)=0,\\
\noalign{\vskip 6pt}
Cx-b+z=0,\\
\noalign{\vskip 6pt}
\lambda z=t^{\kappa_0},
\end{array}
\end{equation}
where $x^0$ is an interior point in the compact convex set $\mathbb Y$ and $d(x,t)$ is defined as in Section \ref{sec2}.  After a transformation of variables, that is, 
$\lambda_i(w)=\left(\dfrac{\sqrt{w_i^2+4t}+w_i}{2}\right)^{\kappa_0}$ and $\quad z_i(w)= \left(\dfrac{\sqrt{w_i^2+4t}-w_i}{2}\right)^{\kappa_0}$ for $i=1,2,\ldots,2n$,
the third group of equations of the system (\ref{exm33}) vanishes and the system (\ref{exm33}) becomes an equivalent form,
\begin{equation}\label{exm34}
\begin{array}{l}
\mathbb (1-t)(d(x,t)-C^\top\lambda(w))-t(x-x^0)=0,\\
\noalign{\vskip 6pt}
Cx-b+z(w)=0.
\end{array}
\end{equation}

Clearly, the efficiency of the proposed method depends on  the number of variables $n$, the sample size $N$, and the number of divisions $L$. Next, we study the impact of these factors on the performance of the GRSAA differentiable homotopy method.
\begin{itemize}
\item First, the GRSAA-DH method has been used to solve the system (\ref{exm34}) under different pairs of $(N,n)$, where $N\in\{10^4, 10^5, 10^6\}$ and $n\in \{1, 2, 3\}$. The number of divisions for the method is fixed to be $L=N/2$. To make a numerical comparison,  we have also employed a standard differentiable homotopy method, which can be considered as a special case of the GRSAA-DH method without a gradual reinforcement process, that is, $L=1$, to solve the same problems. Each case has been run 10 times and the average computational time is reported in Table \ref{t31}.
\begin{table}[!htbp]
\centering
\caption{Numerical Results for the Two Methods}\label{t31}
\renewcommand\arraystretch{1}
\setlength{\tabcolsep}{2mm}{
\begin{tabular}{ccc}
\hline
&GRSAA-DH& Stardard Homotopy\\
\hline
\diagbox{$n$}{$N=10^4$}&\\
\hline
1&\textbf{1.03}&1.18\\
2&\textbf{3.94}&5.64\\
3&\textbf{6.57}&10.98\\
\hline
\diagbox{$n$}{$N=10^5$}&\\
\hline
1&\textbf{8.61}&10.95\\
2&\textbf{35.61}&49.32\\
3&\textbf{61.91}&100.35\\
\hline
\diagbox{$n$}{$N=10^6$}&\\
\hline
1&\textbf{80.20}&95.55\\
2&\textbf{358.47}&474.51\\
3&\textbf{619.35}&1052.27\\
\hline
\end{tabular}}
\end{table}

One can see from Table \ref{t31} that the computational costs of the two methods become larger and larger  with the growing of sample size and the number of variables. For each fixed $n$, the computational time of the GRSAA-DH method increases approximately linearly with $N$. For example, when $n=2$ and $N$ changes from $10^4$ to $10^5$, the computational time of the GRSAA-DH method increases from 3.94 to 35.61. Comparing the last two columns of Table \ref{t31}, one can observe that the GRSAA-DH method is more competitive than the standard  differentiable homotopy method, which verifies that the gradual reinforcement process indeed makes a significant difference. 
\item Second, we want to explore how the computational time is influenced by the number of divisions $L$ and determine an appropriate value of $L$ for the GRSAA differentiable homotopy method. In theory, $L$ can be chosen as any value that is not larger than the sample size $N$. Nonetheless,  it is intuitive that a too large or too small value of $L$ may cause a low numerical efficiency, which has been partially acknowledged in Table \ref{t31}, since the standard  differentiable homotopy method is same as the GRSAA-DH method with $L=1$. In order to achieve a better efficiency, it is necessary to find a suitable value of $L$ when implementing the  GRSAA-DH method. However, finding an optimal value of $L$ is an NP-hard problem in theory and can only be realized through numerical experiments. We have implemented the GRSAA-DH method with different choices of $L$ to solve the system (\ref{exm34}) and plotted the changes of the computational time in various values of $L$ in Figures \ref{f31}, \ref{f32} and \ref{f33}. In these experiments, $N\in\{10^4, 10^5, 10^6\}$ and $n\in \{1, 2, 3\}$.
\end{itemize}
\begin{figure}[!htbp]
\centering
\includegraphics[height=0.35\textwidth]{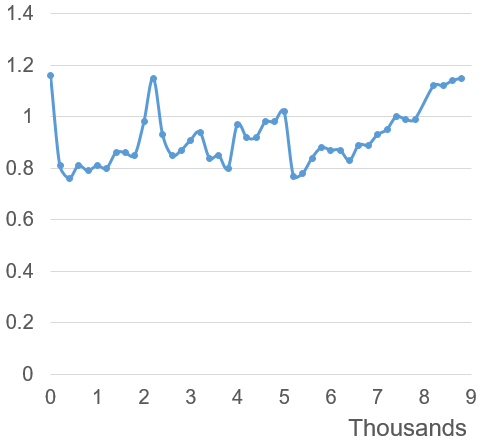}\quad
\includegraphics[height=0.35\textwidth]{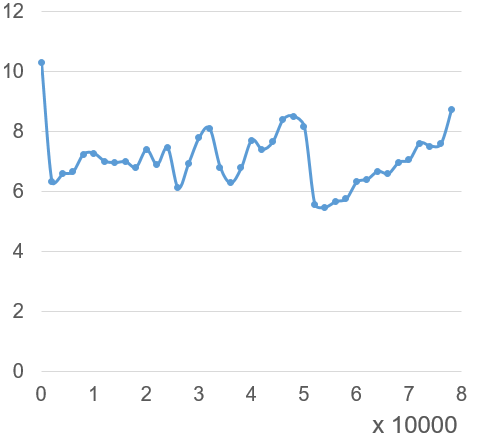}\quad
\includegraphics[height=0.35\textwidth]{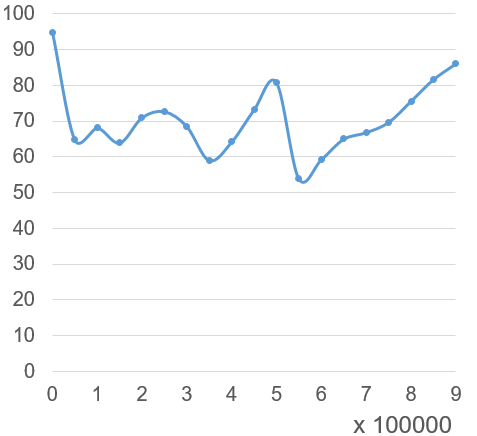}
 \caption{$n=1$ and $N=10^4, 10^5, 10^6$}\label{f31}
\end{figure}
\begin{figure}[!htbp]
\centering
\includegraphics[height=0.35\textwidth]{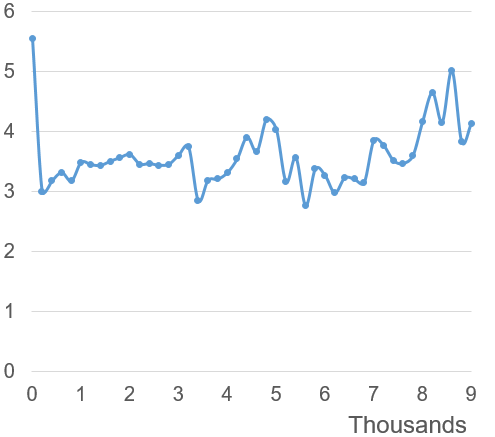}\quad
\includegraphics[height=0.35\textwidth]{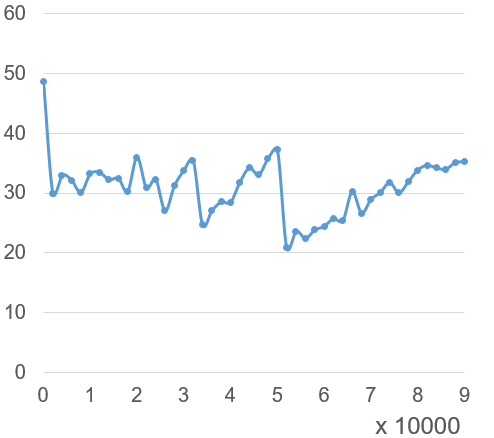}\,
\includegraphics[height=0.35\textwidth]{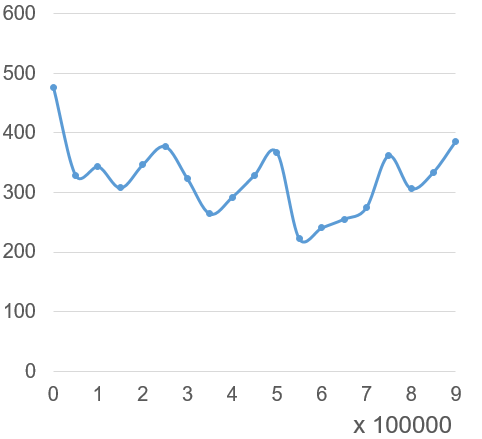}
 \caption{$n=2$ and $N=10^4, 10^5, 10^6$}\label{f32}
\end{figure}
\begin{figure}[!htbp]
\centering
\includegraphics[height=0.35\textwidth]{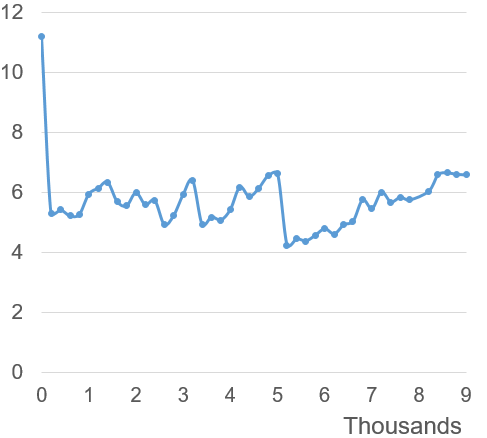}\quad
\includegraphics[height=0.35\textwidth]{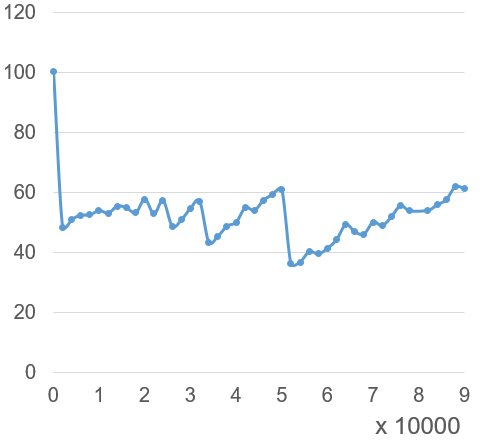}\quad
\includegraphics[height=0.35\textwidth]{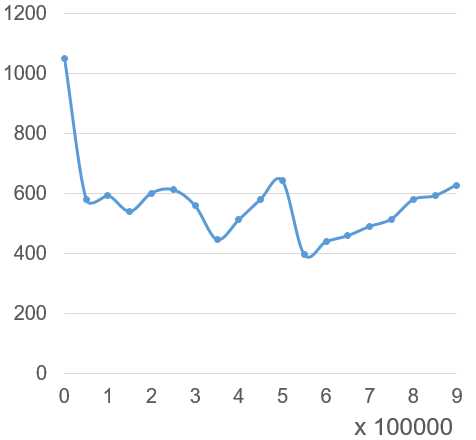}
 \caption{$n=3$ and $N=10^4, 10^5, 10^6$}\label{f33}
\end{figure}

From Figures~\ref{f31}, \ref{f32} and \ref{f33}, we find that the computational time presents a highly  analogous trend for  different cases especially with the relatively high number of variables and a large batch of samples. More specifically, the computational time always decreases as $L=1$ and reaches a local minimum about $L= 0.55 N$. Notice that the local minimum also has a sufficiently competitive performance over the entire possible choices of $L$. Hence, when applying the GRSAA-DH method, one can roughly determine the ``best'' number of divisions according to $L=0.55 N$, which surly performs much better than the standard  differentiable homotopy method.

\section{Conclusions}\label{sec6}

This paper has exploited the sample-average-approximation (SAA) scheme to approximate a stochastic system of equations (SSE) and developed a gradually reinforced sample-average-approximation (GRSAA) differentiable homotopy method to solve the SSE with very large sample size. By introducing a sequence of continuously differentiable functions of the homotopy  parameter ranging between zero and one, we have established a  continuously differentiable homotopy system, which is able to gradually increase the sample size as the homotopy  parameter decreases. The solution set of the homotopy system contains an everywhere smooth path, which starts from an arbitrarily given point and ends at a satisfactory approximate solution to the original SSE. 

The GRSAA differentiable homotopy method provides an effective link between a standard differentiable homotopy method and a gradually reinforced SAA scheme. The proposed method is able to greatly reduce the computational cost for a solution to the SSE with large sample size and retain the inherent property of global convergence with a standard homotopy method. To make numerical comparisons, we have carried out extensive numerical tests. The numerical results further confirm that two main features of the GRSAA differentiable homotopy method,  gradual reinforcement in sample size and  differentiability, can significantly enhance numerical effectiveness and efficiency.

\bibliographystyle{spmpsci}

\bibliography{references}

\end{document}